\title{Stability index for chaotically driven concave maps}
\author{Gerhard Keller}
\address{Department Mathematik, Universität Erlangen-Nürnberg, Cauerstr.~11, 91058 Erlangen, Germany}
\email{keller@mi.uni-erlangen.de}
\thanks{This work is funded by DFG grant Ke 514/8-1. I am indebted to A. Otani (Erlangen) for many helpful remarks on this paper and for producing Figure 1, and I thank R. Ramaswamy and his group (University of Hyderabad, India) for their  hospitality during a visit in March 2012, where a huge part
of this research was done. }
\subjclass[2010]{37D20, 37D45, 37G35, 37H20}
\keywords{Stability index, skew product, strange invariant graph}
\date{\today}
\newcommand{\R}{\mathbb{R}}
\newcommand{\N}{\mathbb{N}}
\newcommand{\hS}{{\hat S}}
\newcommand{\T}{{\mathbb{T}^1}}
\newcommand{\LL}{\mathcal{L}}
\renewcommand{\AA}{\mathcal{A}}
\newcommand{\ppl}{\varphi_{\infty}}
\newcommand{\hppl}{\hat{\varphi}_{\infty}}
\newcommand{\nn}{\nonumber}
\newcommand{\mac}{{\mu_{\rm ac}}}
\renewcommand{\vartheta}{{v}}
\newcommand{\fs}{\,.}
\newcommand{\co}{\,,}
\newtheorem{corollary}{Corollary}
\newtheorem{lemma}{Lemma}
\newtheorem{theorem}{Theorem}
\newtheorem{proposition}{Proposition}
\theoremstyle{definition}
\newtheorem{example}{Example}
\newtheorem{remark}{Remark}
\newtheorem{hypothesis}{Hypothesis}
\numberwithin{equation}{section}
\begin{document}
\begin{abstract}
We study skew product systems driven by a hyperbolic base map $\hat S:\Theta\to\Theta$ (e.g. a baker map or an Anosov surface diffeomorphism) and with simple concave fibre maps on $\R_+$ like 
$x\mapsto  \hat g(\theta)\arctan(x)$ where $\theta\in\Theta$ is a parameter driven by the base map.
The fibre-wise attractor is the graph of an upper semicontinuous function $\theta\mapsto\hppl(\theta)\in\R_+$. 
For many choices of $\hat g$, $\hppl$ has a residual set of zeros but $\hppl>0$ $\mu_{\text{SRB}}$-a.s. where $\mu_{\text{SRB}}$ is the Sinai-Ruelle-Bowen measure of $\hat S^{-1}$. 

In such situations we evaluate the stability index of the global attractor of the system, which is the subgraph 
$\{(\theta,x)\in\Theta\times\R_+: 0\leqslant x\leqslant\hppl(\theta)\}$ of $\hppl$, at all regular points $(\theta,0)$ in terms of the local exponents
$\hat{\Gamma}(\theta):=\lim_{n\to\infty}\frac{1}{n}\log\hat{g}_n(\theta)$ and $\hat{\Lambda}(\theta):=\lim_{n\to\infty}\frac{1}{n}\log|D_u\hS^{-n}(\theta)|$
and of the positive zero ${s_*}$ of a certain thermodynamic pressure function associated with $\hat S$ and $\hat g$. (In queuing theory, an analogon of ${s_*}$ is known as Loyne's exponent \cite{Loynes1962}.)

The stability index was introduced by Podvigina and Ashwin \cite{Podvigina-Ashwin2011} to quantify the local scaling of basins of attraction.

\end{abstract}

\maketitle

\section{Introduction}

\subsection{Motivation}
Consider a monotone concave map $h$ that maps some interval $[0,a]$ into itself with $h(0)=0$ and $h'(0)=1$. The family $h_r(x)=rh(x)$ with $0\leqslant r\leqslant h(a)^{-1}$ has a very simple bifurcation scenario: for $r\leqslant1$, the point $0$ is a globally attracting fix point, that looses its stability at $r=1$ and gives birth to a new stable fixed point $x_s>0$ which attracts all points except the fixed point $0$.

If the bifurcation parameter $r$ is not fixed but is driven by some ergodic dynamics, the scenario becomes a bit more complex. Quasiperiodic drives may lead to the creation of strange non-chaotic attractors (SNA) as the result of the loss of stability of a stable non-autonomous fixed point, a phenomenon that attracted much attention both in the physics and the mathematics literature, see e.g. the references collected in \cite{Feudel-book,Jager2009}.
More recently, also systems with chaotic drives were studied - mostly in the physics literature where they are used as simple examples to study generalized synchronisation, see e.g.~\cite{Singh2008}. Due to the presence of many different normal Lyapunov exponents associated to different invariant measures of the chaotic driving system, 
the loss of stability of the globally attracting non-autonomous fixed point at~$0$ and the creation of an attracting non-autonomous fixed point which is everywhere strictly positive is a complicated process that happens while the parameter varies in a nontrivial interval \cite{Singh2008}.
The goal of this paper is to describe some quantitative features of this process in simple model situations.

\subsection{The class of systems}
We study skew product systems where the driving system is
a bijective bi-measurable map $\hS:\Theta\to\Theta$ on a measurable space $(\Theta,\AA)$ that has good hyperbolicity properties to be specified below. The fibre maps from an interval $I:=[0,a]$ into itself are of the form $x\mapsto\hat g(\theta)h(x)$ where $\hat g:\Theta\to(0,\infty)$
and $h:I\to\R_+$ is a strictly increasing, concave $C^{1+}$-function with $h(0)=0$ and $h'(0)=1$.\footnote{Here and in the sequel $C^{1+}$ means "$C^1$ with Hölder continuous derivative" without specifying the Hölder exponent.}
Let $\Omega=\Theta\times I$. Then the driven system is described by
\begin{equation}
F:\Omega\to\Omega,\quad
F(\theta,x)=(\hS\theta,\hat g(\theta)h(x))\fs
\end{equation}
Denote by $F^n_\theta:I\to I$ the fibre map of the iterated map $F^n$, i.e. 
$F^n_\theta(x)$ is the second component of $F^n(\theta,x)$.

The global pullback attractor of this system is the set
\begin{equation}
\{(\theta,x)\in\Omega: 0\leqslant x\leqslant\hppl(\theta)\}
\end{equation}
where $\hppl:\Theta\to I$ is the \emph{maximal invariant graph} (with the slight abuse of terminology that we do not distinguish between the function and its graph). It is defined for all $\theta\in\Theta$ by
\begin{equation}\label{eq:hat_phi_def}
\hppl(\theta)=\lim_{n\to\infty}\hat{\varphi}_n(\theta),
\text{ where\quad}
\hat{\varphi}_n(\theta):=F^n_{\hS^{-n}\theta}(a)\fs
\end{equation}
The limit exists and is measurable, because
$\hat\varphi_{n+1}(\theta)=F^n_{\hS^{-n}\theta}(F_{\hS^{-(n+1)}\theta}(a))
\leqslant F^n_{\hS^{-n}\theta}(a){=\hat\varphi_n(\theta)}$ in view of the monotonicity of the fibre maps. If $\Theta$ is a topological space and if all $\hat{g}\circ\hS^{-n}$ are continuous, then also all $\hat{\varphi}_n$ are continuous so that $\hppl$ is upper semicontinuous.

In order to obtain some quantitative, dimension-like information about $\hppl$, we need some additional uniformly hyperbolic or expanding structure for the system. The following assumptions are a compromise between the goal to cover a number of different examples and to keep technicalities at a moderate level. 
\begin{hypothesis}\label{hypo:1}
There is a piecewise expanding and piecewise $C^{1+}$ mixing Markov map 
$S:\T\to\T$ with finitely many branches which is a factor of $\hS^{-1}$, i.e.
\begin{equation}
S\circ\Pi=\Pi\circ\hS^{-1}\text{\quad for some measurable\quad}\Pi:\Theta\to\T\fs
\end{equation}
It is a well known fact that $S$ has a unique invariant probability measure $\mac$ absolutely continuous w.r.t. Lebesgue measure $m$ on $\T$.
\end{hypothesis}
\begin{remark}
One can also admit countable Markov maps with finite range structure, and a careful look at the proofs reveals possibilities to weaken the assumption on $S$ even further.
\end{remark}
\begin{hypothesis}\label{hypo:2}
The multiplier function $\hat g$ depends only on $\Pi\theta$, i.e.
\begin{equation}\label{eq:factor-g}
\hat g(\theta)=g(\Pi\theta)
\end{equation}
for a suitable function $g:\T\to(0,\infty)$. (How to deal with more general multiplier functions when $\hS$ is (piecewise) hyperbolic, is explained in Remark~\ref{remark:general-mult}.)
Let $g_n=\prod_{i=1}^ng\circ S^i$, and denote
by $\mathcal{U}_n(\vartheta)$ the family of all interval neighbourhoods $U$ of $\vartheta\in\T$ such that $S^n_{|U}:U\to S^nU$ is a diffeomorphism.
We assume that the family of all
${g_n}_{|U}$ with $n\geqslant1$, $\vartheta\in\T$ and $U\in\mathcal{U}_n(\vartheta)$ has uniformly bounded   distortion in the following sense:
There is a constant $D>0$ such that for all $n>0$, all $\vartheta\in\T$, all $U\in\mathcal{U}_n(\vartheta)$ and all ${\tilde\vartheta}\in U$ 
\begin{equation}\label{eq:bounded-osc}
D^{-1}\leqslant\left|\frac{g_n({\tilde\vartheta})}{g_n(\vartheta)}\right|\leqslant D\fs
\end{equation}
\end{hypothesis}

\begin{remark}\label{remark:distortion}
If $\log g$ is Hölder continuous on each monotonicity interval of $S$, assumption (\ref{eq:bounded-osc}) is a simple classical consequence of the uniform expansion of $S$. Similarly we have
(enlarging $D$, if necessary)
\begin{equation}
D^{-1}\leqslant\left|\frac{(S^n)'({\tilde\vartheta})}{(S^n)'(\vartheta)}\right|\leqslant D\fs
\end{equation}
\end{remark}

\begin{remark}
The variable $\theta$ enters the definition of the approximating functions $\hat{\varphi}_n$ only via the values $\hat g(\hS^{-k}\theta)=g(S^k(\Pi\theta))$, $k=1,\dots,n$. Therefore the graph $\hppl(\theta)$ depends on $\theta$ only via 
$\Pi\theta$ so that there is a measurable function $\ppl:\T\to I$ such that $\hppl(\theta)=\ppl(\Pi\theta)$. The geometric properties of this function are what we are basically interested in. 
Corresponding properties of the function $\hppl$ will follow as corollaries.
\end{remark}
%\begin{remark}[{\bf The one-sided case $\hat{b}=0$}]
%In this case the variable $\theta$ enters the definition of the branches $F^n_{\hS^{-n}\theta}$ only via the values $\hat g(\hS^{-k}\theta)=g(S^k(\Pi\theta))$, $k=1,\dots,n$. Therefore the graph $\hppl(\theta)$ depends on $\theta$ only via 
%$\Pi\theta$ so that there is a measurable function $\ppl:\T\to I$ such that $\hppl(\theta)=\ppl(\Pi\theta)$. The geometric properties of this function are what we are basically interested in. The general case $\hat{b}\neq0$ can be reduced to this special case.
%\end{remark}

The following is a well known consequence of the semi-uniform ergodic theorem \cite{Sturman2000} and of the uniform concavity of the fibre maps: $\ppl(\vartheta)=0$ for all $\vartheta\in\T$ if
$\int_\T \log g\,d\mu<0$ for all $S$-invariant probability measures $\mu$, and $\ppl$ is strictly positive if $\int_\T \log g\,d\mu>0$ for all such~$\mu$. The most interesting situation occurs under the following hypothesis:
\begin{hypothesis}
\label{hypo:3}
There is an $S$-invariant probability measures $\mu_-$ such that
\begin{equation}\label{eq:ass-minus-plus}
\int\log g\,d\mu_-<0<\int\log g\,d\mac\fs
\end{equation}
Note that under this assumption $\log g$ is not cohomologous to a constant and that it is easy to prove (see \cite{Keller1996,KeOt2012}) 
that $\ppl(\vartheta)>0$ for $\mac$-a.e. $\vartheta$.
\end{hypothesis}

\begin{example}[\textbf{Baker transformations}]
\label{ex:baker}
Let $\Theta=[0,1)^2$ and let $\hS:\Theta\to\Theta$ be a baker transformation
\begin{equation}
\hS(u,\vartheta)=
\begin{cases}
\left(s^{-1}u,s\vartheta\right)&\text{if }u<s\\
\left((1-s)^{-1}(u-s),s+(1-s)\vartheta\right)&\text{if }u\geqslant s\fs
\end{cases}
\end{equation}
With $\Pi(u,\vartheta)=\vartheta$ and with $S(\vartheta)=s^{-1}\vartheta$ for $\vartheta<s$ and $S(\vartheta)=(1-s)^{-1}(\vartheta-s)$ if $\vartheta\geqslant s$ this fits the above setting. Figure~\ref{fig:baker} shows plots of the invariant graph $\ppl(\vartheta)$ when $s=0.45$,
$h(x)=\arctan(x)$ and the multiplier function $g:\T\to(0,\infty)$ is $g(\vartheta)=r\cdot(1+\epsilon+\cos(2\pi \vartheta))$ with $\epsilon=0.01$. 
Observe that in this example all $\hat{g}\circ\hS^{-n}$ are continuous when interpreted as defined on the circle $\T$ so that $\hppl$ and $\ppl$ are upper semicontinuous.
Our main results shed some light on the structure of $\ppl$ close to the base line, i.e. when these values are small. 

In this example, the $S$-invariant measure $\delta_0$ maximizes $\int\log g\,d\mu$ (the value is $\log (r\cdot2.01)$), and the equidistribution on the period-3 orbit
$[0.10255, 0.22788, 0.50640]$ apparently minimizes this quantity (the value is $\log (r\cdot0.28216)$). The corresponding value for Lebesgue measure $\mu=m$ is $\log (r\cdot0.57589)$. So assumption (\ref{eq:ass-minus-plus}) is satisfied
for parameters $r\in[0.57589^{-1},0.28216^{-1}]=[1.7364,3.5441]$, and the parameters used in Figure~\ref{fig:baker} are in this range.
\begin{figure}
\label{fig:baker}
\begin{center}
\includegraphics[height=50mm,width=55mm]{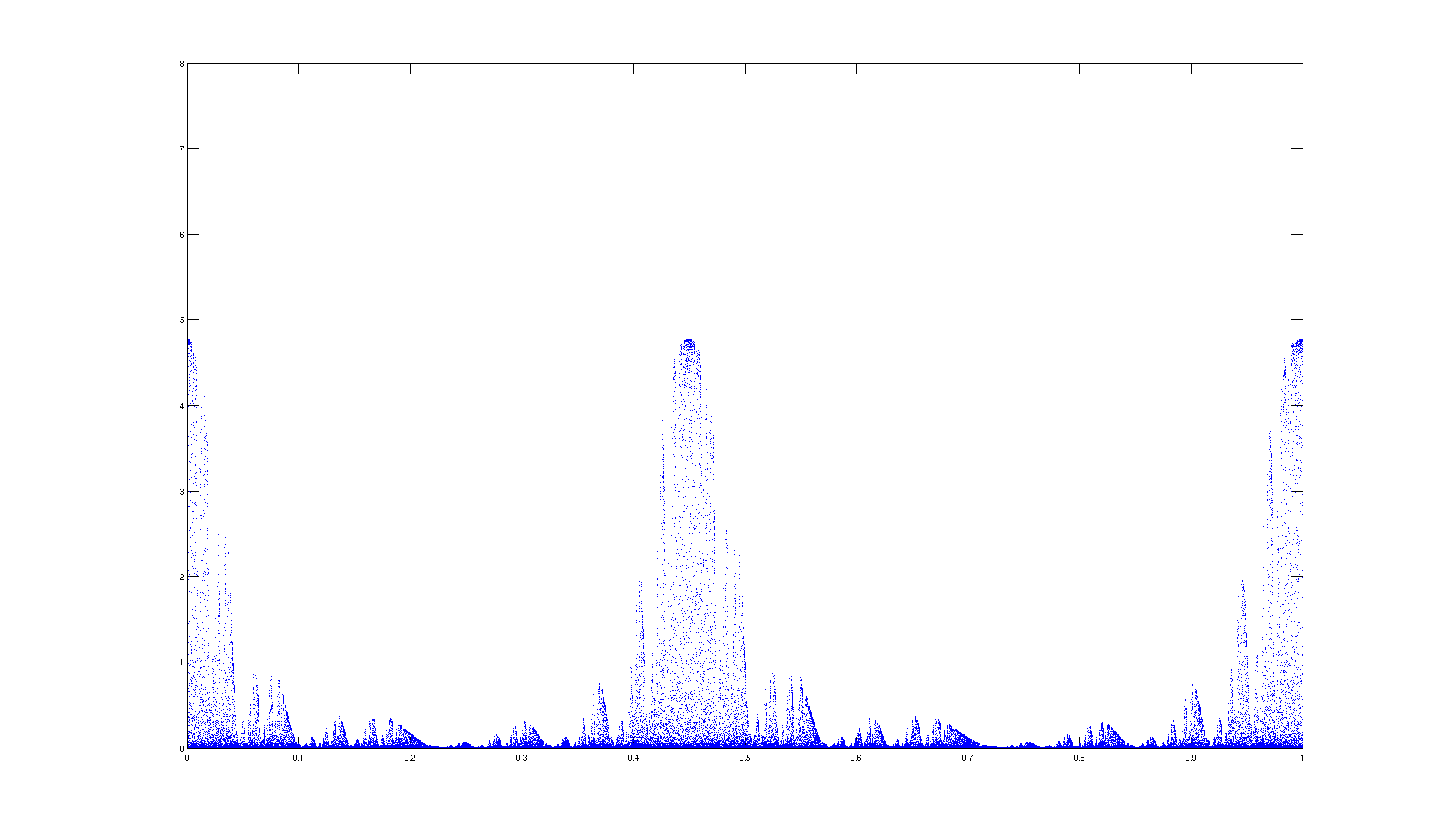}\hspace{-0.6cm}
\includegraphics[height=50mm,width=55mm]{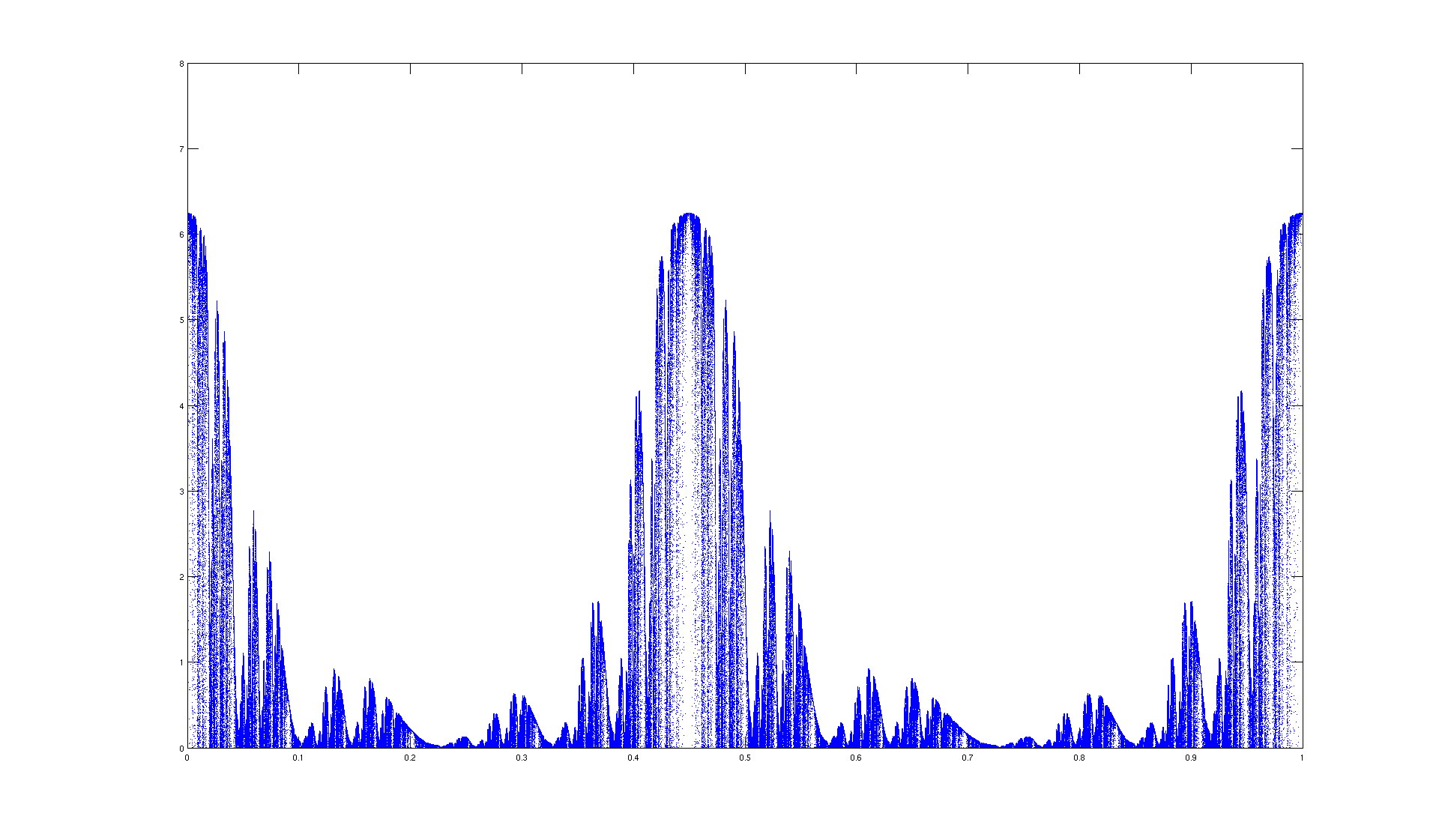}\hspace{-0.6cm}
\includegraphics[height=50mm,width=55mm]{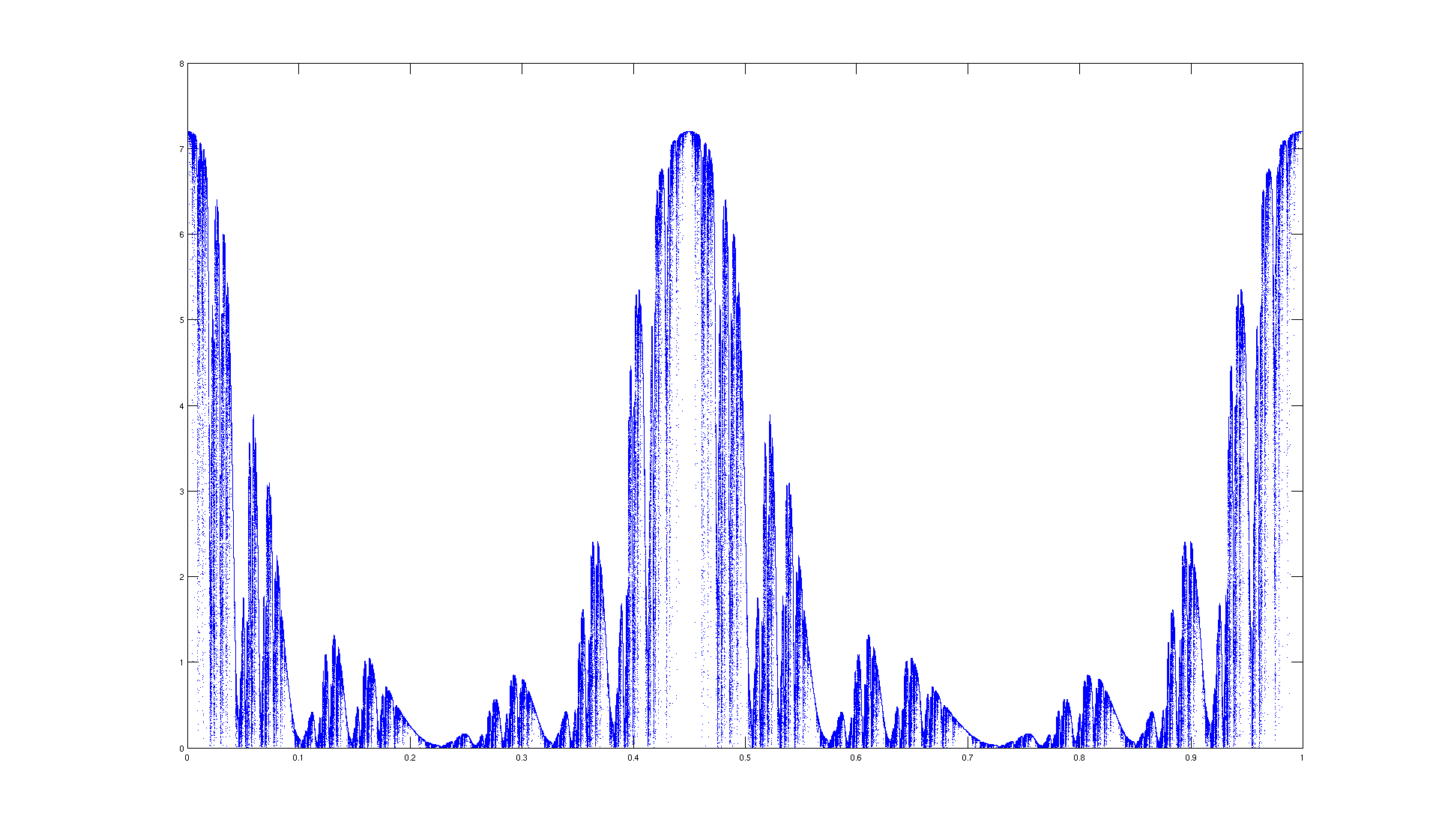}
\end{center}
\caption{The graph $\ppl(\vartheta)$ for the baker map from Example \ref{ex:baker}. The parameters are (from left to right) $r=1.74$, $r=2.2$, $r=2.5$.}
\end{figure}
\end{example}

\begin{remark}\label{remark:general-mult}
Baker transformations are particularly simple examples where the sets $\Pi^{-1}(\vartheta)$ are uniformly stable fibres for the action of $\hS^{-1}$ on $\Theta$. In such situations one can also deal with multiplier functions $\hat{g}(\theta)$ that do not only depend on $\Pi\theta$ as required in Hypothesis~\ref{hypo:2}. 
Under suitable assumptions, a classical construction which goes back to works of Sinai and of Bowen yields functions $\hat{b}:\Theta\to\R$ and $g:\T\to(0,\infty)$ such that
\begin{equation}\label{eq:comology_for_g}
\log\hat{g}(\theta)=\log g(\Pi\theta)+\hat{b}(\theta)-\hat{b}(\hS^{-1}\theta)
\fs
\end{equation}
More precisely, we assume:
\begin{enumerate}[i)]
\item $\log\hat g:\Theta\to\R$ is Hölder continuous. (Hölder continuity on each set $\Pi^{-1}J$ where $J$ is a monotonicity interval of $S$ suffices.)
\item There is an injection $\varsigma:\T\to\Theta$ which is Hölder continuous on monotonicity intervals of $S$, which satisfies  $\Pi\circ\varsigma=\operatorname{id}_\T$,
and which is such that each $\theta\in\Theta$ belongs to the stable fibre of $\varsigma\Pi\theta$ in the sense that
\begin{equation}
\exists C>0\ \exists r\in(0,1)\ \forall \theta\in\Theta\ \forall n>0:\ 
{d}(\hS^{-n}\theta,\hS^{-n}(\varsigma\Pi\theta))\leqslant
C\,r^n\fs
\end{equation}
\end{enumerate}
Following \cite[Lemma 1.6]{Bowen}, define
\begin{equation}
\hat{b}(\theta)=\sum_{n=0}^\infty\left(\log\hat{g}(\hS^{-n}\theta)-\log\hat{g}(\hS^{-n}\varsigma\Pi\theta)\right)\fs
\end{equation}
As $\log\hat{g}$ is Hölder continuous, $\|\hat{b}\|_\infty:=\sup_{\theta\in\Theta}|\hat{b}(\theta)|<\infty$, and 
\begin{equation*}
\hat{b}(\theta)-\hat{b}(\hS^{-1}\theta)
=
\log\hat{g}(\theta)-\left[\log\hat{g}(\varsigma\Pi\theta)
+\sum_{n=1}^\infty\left(\log\hat{g}(\hS^{-n}\varsigma\Pi\theta)-\log\hat{g}(\hS^{-n+1}\varsigma\Pi\hS^{-1}\theta)\right)\right]
\fs
\end{equation*}
The term in brackets depends only on $\varsigma\Pi\theta$, and we denote it by $\log g(\Pi\theta)$. Then
\begin{equation}\label{eq:log_g-cohom}
\hat{b}(\theta)-\hat{b}(\hS^{-1}\theta)
=
\log\hat{g}(\theta)-\log g(\Pi\theta)\co
\end{equation}
and one can show that $\hat{b}$ and $\log\hat{g}$ are Hölder continuous \cite[Lemma 1.6]{Bowen}. In particular, the distortion bounds of Hypothesis~\ref{hypo:2} are satisfied.

Denote now by $\hppl$ the invariant graph of the system with multiplier $\hat{g}$, and by $\ppl\circ\Pi$ the invariant graph of the system with multiplier $g\circ\Pi$. 
We prove the following proposition in section~\ref{sec:anosov-proofs}.
\end{remark}

\begin{proposition}\label{prop:anosov-lemma}
For each $\theta\in\Theta$, 
$\hppl(\theta)>0$ if and only if 
$\varphi_\infty(\Pi\theta)>0$, and if this is the case, then
\begin{equation}\label{eq:anosov-lemma}
|\log\hppl(\theta)-\log\varphi_\infty(\Pi\theta)|
\leqslant
\log\frac{a}{h(a)}+2\|\hat{b}\|_\infty\fs
\end{equation}
\end{proposition}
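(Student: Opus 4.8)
The idea is to compare, for a fixed $\theta$, the approximating sequences $\hat\varphi_n(\theta)$ for the $\hat g$-system and $\varphi_n(\Pi\theta)$ for the $g\circ\Pi$-system, using the cohomological relation \eqref{eq:log_g-cohom} to control the difference of the products $\hat g_n(\theta)=\prod_{k=1}^n\hat g(\hS^{-k}\theta)$ and $g_n(\Pi\theta)=\prod_{k=1}^n g(S^k\Pi\theta)$. Summing \eqref{eq:log_g-cohom} along the backward orbit $\theta,\hS^{-1}\theta,\dots,\hS^{-(n-1)}\theta$ telescopes to
\begin{equation*}
\log\hat g_n(\theta)-\log g_n(\Pi\theta)=\hat b(\hS^{-n}\theta)-\hat b(\theta)\co
\end{equation*}
so that $e^{-2\|\hat b\|_\infty}\leqslant \hat g_n(\theta)/g_n(\Pi\theta)\leqslant e^{2\|\hat b\|_\infty}$ uniformly in $n$ and $\theta$. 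Thus the two multiplier products differ only by a bounded factor.

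Next I would turn this multiplicative comparison of the $g_n$'s into a comparison of the fibre maps applied to $a$. Write $\kappa:=e^{2\|\hat b\|_\infty}$. The concavity and normalisation $h(0)=0$, $h'(0)=1$ give, for $0<x\leqslant a$ and $0<c\leqslant 1$, the sandwich $c\,h(x)\leqslant h(cx)\leqslant h(x)$, and more crudely $\frac{h(a)}{a}\,x\leqslant h(x)\leqslant x$ for $x\in[0,a]$; hence for any $c>0$ with $cx\in[0,a]$ one has $\frac{h(a)}{a}\,c\,x\leqslant h(cx)\leqslant c\,x$ when $c\leqslant1$ and a symmetric bound when $c\geqslant1$, and in either case $c\cdot\frac{h(a)}{a}\leqslant h(cx)/x\leqslant c\cdot\frac{a}{h(a)}$ — the key point being that multiplying the argument by a factor in $[\kappa^{-1},\kappa]$ changes $h$ by a factor in $[\kappa^{-1}\frac{h(a)}{a},\kappa\frac{a}{h(a)}]$, i.e. by a bounded factor. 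Iterating the fibre maps, $F^n_{\hS^{-n}\theta}(a)$ is an $n$-fold composition of maps $x\mapsto \hat g(\hS^{-k}\theta)h(x)$, and by the above each such step is comparable, up to the factor $a/h(a)$, to multiplication by $\hat g(\hS^{-k}\theta)$; a standard induction then shows
\begin{equation*}
\frac{h(a)}{a}\,\hat g_n(\theta)\,h(a)\ \leqslant\ \hat\varphi_n(\theta)\ \leqslant\ \hat g_n(\theta)\,a\co
\end{equation*}
and similarly for $\varphi_n(\Pi\theta)$ with $g_n(\Pi\theta)$ in place of $\hat g_n(\theta)$. The precise bookkeeping here is routine but must be done carefully so that the accumulated constant stays bounded (independent of $n$); this is the one place where I expect a little work.

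Combining the two ingredients: from $\hat g_n(\theta)\asymp g_n(\Pi\theta)$ within a factor $\kappa$ and the sandwich of $\hat\varphi_n$ and $\varphi_n$ between $g_n\cdot(\text{const})$ and $g_n/(\text{const})$, I obtain $\kappa^{-1}\frac{h(a)}{a}\cdot\frac{h(a)}{a}\leqslant \hat\varphi_n(\theta)/\varphi_n(\Pi\theta)\leqslant \kappa\,\frac{a}{h(a)}\cdot\frac{a}{h(a)}$ — i.e. a bound of the form $|\log\hat\varphi_n(\theta)-\log\varphi_n(\Pi\theta)|\leqslant \log\frac{a}{h(a)}+2\|\hat b\|_\infty$ after the elementary constants are tracked to match \eqref{eq:anosov-lemma} (one should refine the crude $h(a)/a$ bounds to a single $a/h(a)$ factor by comparing the two fibre-map iterations directly rather than through $\hat g_n$ twice — e.g. since $\hat\varphi_n$ and $\varphi_n$ are iterations of $h$ composed with multipliers that agree up to the factor $e^{\hat b(\hS^{-n}\theta)-\hat b(\theta)}$, one compares them step by step and the only genuinely non-multiplicative distortion of $h$ is swallowed into the single constant $a/h(a)$). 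Finally, letting $n\to\infty$: since $\hat\varphi_n(\theta)\downarrow\hppl(\theta)$ and $\varphi_n(\Pi\theta)\downarrow\varphi_\infty(\Pi\theta)$ by \eqref{eq:hat_phi_def}, the bound passes to the limit whenever both limits are positive; and positivity is equivalent because the ratio $\hat\varphi_n/\varphi_n$ is bounded away from $0$ and $\infty$ uniformly in $n$, so $\hppl(\theta)>0\iff\varphi_\infty(\Pi\theta)>0$, proving the proposition. The main obstacle is purely the inductive distortion estimate for the composed concave fibre maps — making sure the constant does not grow with $n$ — which is handled by the concavity-plus-normalisation sandwich $\frac{h(a)}{a}x\leqslant h(x)\leqslant x$ together with the uniform bound on $\hat g_n/g_n$.
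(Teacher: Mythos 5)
Your first ingredient is correct and matches the paper: telescoping the cohomology relation gives $\left|\log\hat g_n(\theta)-\log g_n(\Pi\theta)\right|\leqslant 2\|\hat b\|_\infty$ uniformly. The second ingredient is where there is a genuine gap. The claimed sandwich $\frac{h(a)}{a}\hat g_n(\theta)h(a)\leqslant\hat\varphi_n(\theta)\leqslant\hat g_n(\theta)a$ is simply false for $n\geqslant 3$: if you apply the only available pointwise bound $\frac{h(a)}{a}x\leqslant h(x)\leqslant x$ at each of the $n$ steps of the composition, the lower bound you actually get is $\hat\varphi_n(\theta)\geqslant\hat g_n(\theta)\,\frac{h(a)^n}{a^{n-1}}$, whose prefactor decays geometrically in $n$. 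There is no ``standard induction'' producing an $n$-independent constant here, and the remark that the accumulating error ``is swallowed into the single constant $a/h(a)$'' when comparing the two iterations directly is asserted, not proved.

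Here is the missing mechanism. Write $u_k=\hat\varphi_{n-k}(\hS^{-k}\theta)$, $v_k=\varphi_{n-k}(\Pi\hS^{-k}\theta)$ and $H(x)=\log\frac{h(x)}{x}$, which is negative and \emph{strictly decreasing} on $(0,a]$ with $H(a)=\log\frac{h(a)}{a}$. The step-by-step comparison gives the exact identity
\begin{equation*}
\log\frac{u_0}{v_0}=\bigl(\hat b(\hS^{-1}\theta)-\hat b(\hS^{-\ell-1}\theta)\bigr)
+\log\frac{u_\ell}{v_\ell}+\sum_{k=1}^{\ell}\bigl(H(u_k)-H(v_k)\bigr)\,,
\end{equation*}
and the sum of $H$-differences can \emph{a priori} be of order $\ell\cdot|H(a)|$, so the constant really threatens to grow. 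The paper controls it with a stopping time: take $\ell=\ell(n)$ to be the smallest index with $u_{\ell}\geqslant v_{\ell}$ (well-defined since $u_n=v_n=a$). For $k<\ell(n)$ one has $u_k<v_k$, so $H(u_k)-H(v_k)>0$ by monotonicity, these terms only help the lower bound; the term $k=\ell(n)$ is bounded below by $H(a)$ because $H$ ranges in $[H(a),0]$; and $\log(u_{\ell(n)}/v_{\ell(n)})\geqslant 0$ by the choice of $\ell(n)$. This yields $\log(u_0/v_0)\geqslant H(a)-2\|\hat b\|_\infty$, with the single additive constant $|H(a)|=\log\frac{a}{h(a)}$ appearing exactly once, and the upper bound follows symmetrically. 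Without this (or an equivalent device) your bound does not close, because the naive iteration accumulates an $n$-dependent factor.
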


\begin{example}[\textbf{Anosov surface diffeomorphism}]
\label{ex:anosov}
Let $\Theta=\mathbb{T}^2$ and let $\hS:\mathbb{T}^2\to\mathbb{T}^2$ be a $C^2$ Anosov diffeomorphism. 
It has a Markov partition $\{R_1,\dots,R_p\}$ \cite{Robinson1995}. 
As indicated in the proof of Lemma 3 in \cite{Pollicott2003} (see also section~\ref{subsec:Anosov-Markov})
one can construct a $C^{1+}$ expanding Markov interval map $S:\T\to\T$ that is a factor of $\hS^{-1}$, i.e.
$S\circ\Pi=\Pi\circ\hS^{-1}$ with the projection $\Pi:\mathbb{T}^2\to\T$ 
and the injection $\varsigma:\T\to\mathbb{T}^2$ defined in section~\ref{subsec:Anosov-Markov}.
If $\hat g:\mathbb{T}^2\to(0,\infty)$ is a Hölder function, then there are functions $g:\T\to(0,\infty)$ and 
$\hat{b}:\mathbb{T}^2\to\R$
such that $\log\hat{g}=\log g\circ\Pi+\hat{b}-\hat{b}\circ\hS^{-1}$ and $\log {g}$ is Hölder continuous on every monotonicity interval of $S$, compare Remark~\ref{remark:general-mult}. 

Denote by $\hat\mu_{\text{SRB}}^-$ the SRB-measure of $\hS^{-1}$. It projects to a $S$-invariant measure $\mu_{\text{SRB}}^-$ on $\T$.
As $\hat\mu_{\text{SRB}}^-$ is absolutely continuous on unstable fibres of $\hS^{-1}$, the measure $\mu_{\text{SRB}}^-\circ\Pi^{-1}$ is absolutely continuous w.r.t. Lebesgue measure on $\T$, so that it coincides with the unique absolutely continuous invariant measure $\mac$ of $S$ 
from Hypothesis~\ref{hypo:1}. 
Using the explicit representation for the Jacobian $D\Pi$ of the holonomy along stable fibres of $\hS^{-1}$ (in this case: the absolute value of the derivative of the holonomy), it is not hard to prove that
\begin{equation}\label{eq:log_DS-cohom}
\log|D_u\hS^{-1}(\theta)|=\log |S'(\Pi\theta)|+\log D\Pi(\theta)-\log D\Pi(\hS^{-1}\theta)\fs
\end{equation}
For completeness the proof is provided in section~\ref{sec:anosov-proofs}. Here $D_u$ denotes the derivative in the unstable direction of $\hS^{-1}$.
Proposition~\ref{prop:anosov-lemma} applies in this situation so that
the graphs of $\hppl$ and of $\varphi_\infty\circ\Pi$ can again be compared as in (\ref{eq:anosov-lemma}).
\end{example}

\section{Main results}
Throughout we assume that Hypotheses 1 - 3 are satisfied. 

\subsection{Global scaling properties}
\label{subsec:global-sc}

A global characteristic of the invariant graph $\ppl:\T\to[0,\infty)$	is the distribution of its values - in particular of values close to zero -  under Lebesgue measure $m$. Recall that $\ppl(\vartheta)>0$ for $m$-a.e.~$\vartheta\in\T$ by Hypothesis~\ref{hypo:3}.

For $s\in\R$ denote by $\LL_s$ the transfer operator 
\begin{equation}\label{eq:PF}
\LL_s:L^1_m\to L^1_m,\; \LL_sf(\vartheta)=\sum_{\tilde{\vartheta}\in S^{-1}\vartheta}\frac{f(\tilde{\vartheta})}{|S'(\tilde{\vartheta})|}e^{-s \log g(\tilde{\vartheta})}\co
\end{equation}
and let $\rho(\LL_s)$ be its spectral radius. Define $\psi(s)=\log\rho(\LL_s)$, and
observe that $\psi(s)$ is the topological pressure of the potential $-\log|S'|-s\log g$ under the dynamics of $S$
\cite{ParryPollicott}. \footnote{To be more precise, it is the pressure of the topological Markov chain that encodes $S$.}

The operator $\LL_0$ is the usual Perron-Frobenius operator of $S$, so $\psi(0)=0$ and
$\psi'(0)=-\int\log g\,d\mac<0$, see e.g. \cite{ParryPollicott}. From the assumption
in Hypothesis~\ref{hypo:3} that there is also a measure $\mu_-$ with 
$-\int\log g\,d\mu_->0$, it follows that $\psi(s)\to\infty$ as $s\to\infty$. Because of its convexity, $\psi(s)$ has therefore a unique further zero ${s_*}>0$. This number characterizes the distribution of "small values" of $\ppl$ in the sense of the following theorem.

\begin{theorem}\label{theo:tail}
\begin{equation}\label{eq:tail-scaling}
\lim_{x\to\infty}\frac{1}{x}\log m\{\vartheta\in\T: \log\ppl(\vartheta)<-x\}=-{s_*}\fs
\end{equation}
Replacing $-x$ by $\log \epsilon$, this can be reformulated as
\begin{equation}
\lim_{\epsilon\to0}\frac{\log m\{\ppl<\epsilon\}}{\log\epsilon}={s_*}\fs
\end{equation}
\end{theorem}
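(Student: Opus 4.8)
The plan is to reduce the statement to a large-deviation estimate for the Birkhoff sums of $\log g$ under $S$, controlled by the pressure function $\psi$. The starting point is a pointwise description of $\ppl$: since $\ppl(\vartheta)=\lim_n\hat\varphi_n(\vartheta)$ with $\hat\varphi_n(\vartheta)=F^n_{S^{-n}\vartheta}(a)$ and the fibre maps $x\mapsto g(\vartheta)h(x)$ are concave with $h'(0)=1$, one has the two-sided comparison $c\cdot\min(1,g_n(\vartheta))\leqslant\hat\varphi_n(\vartheta)\leqslant g_n(\vartheta)$, where the lower bound uses uniform concavity (the derivative of $h$ near $0$ stays close to $1$, so as long as the orbit has not yet grown to order $1$ the product $g_n$ is tracked up to a bounded factor). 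Iterating the superposition more carefully — the orbit is contracted toward $0$ precisely along the maximal suffix of the word $g(S\vartheta),\dots,g(S^n\vartheta)$ whose partial products stay below a threshold — one gets that $\log\ppl(\vartheta)$ is, up to an additive $O(1)$, equal to $\min_{0\leqslant k\leqslant\infty}\sum_{i=1}^{k}\log g(S^i\vartheta)$ (a Loynes-type representation). Hence $\{\log\ppl<-x\}$ coincides, up to $O(1)$ shifts in $x$, with $\{\vartheta:\exists k\ \sum_{i=1}^k\log g(S^i\vartheta)<-x\}$.

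Next I would estimate $m$ of this set. The upper bound: $m\{\exists k\le N:\ S_k\log g<-x\}\le\sum_{k}m\{S_k\log g<-x\}$, and each term is bounded via the transfer operator — $m\{S_k\log g<-x\}\le e^{-sx}\int e^{-s\,S_k\log g}\,dm = e^{-sx}\int \LL_s^k\mathbf 1\,dm$ for any $s>0$, which by the spectral bound is $\le C e^{-sx}e^{k(\psi(s)+o(1))}$. Choosing $s=s_*$, so that $\psi(s_*)=0$, kills the exponential growth in $k$ and, after summing over $k$ and handling the tail $k>N$ separately (using that for large $k$ the Birkhoff average is close to $\int\log g\,d\mac>0$, so those $k$ contribute negligibly by a crude large-deviation bound), yields $\limsup_{x\to\infty}\frac1x\log m\{\log\ppl<-x\}\le -s_*$. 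The lower bound: it suffices to exhibit one scale $k=k(x)$ with $m\{S_k\log g<-x\}\ge e^{-(s_*+\delta)x}$ for large $x$. This is a standard lower large-deviation estimate: the rate function $I(u)=\sup_s(su-\psi(s))$ satisfies, at the value $u=0^-$ approached along the relevant tangent line, $I$-slope exactly $s_*$ because $\psi(s_*)=0$ means the Legendre dual of the half-line $\{u\le 0\}$ contributes rate $s_*|u|$; taking $k\approx x/|u|$ with $u$ slightly negative and $u\to 0$ gives the matching bound. Equivalently one uses the change-of-measure (tilting) argument: the tilted operator $\LL_{s_*}$ has spectral radius $1$ with a genuine eigenmeasure $\nu_{s_*}$, and under the associated invariant measure $\log g$ has negative mean, so $\nu_{s_*}\{S_k\log g<-x\}$ is of order $1$ for $k\asymp x/|\text{mean}|$, and untilting costs exactly $e^{-s_*x}$ up to subexponential factors.

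The technical glue I would need is: (i) quantitative spectral control of $\LL_s$ — that $\rho(\LL_s)=e^{\psi(s)}$ and that $\int\LL_s^k\mathbf1\,dm$ grows like $e^{k\psi(s)}$ up to subexponential factors — which for a piecewise-expanding $C^{1+}$ mixing Markov map with Hölder $\log g$ (on monotonicity intervals) is classical, via a spectral gap on functions of bounded variation or on a suitable Hölder space; (ii) the distortion bound \eqref{eq:bounded-osc}, which is what makes the approximation $\hat\varphi_n\asymp\min_k g_k$ uniform in $\vartheta$ and which lets one pass between "there exists an interval on which $g_k$ is small" and "$m$-measure of small $g_k$"; and (iii) the elementary lemma turning concave fibre dynamics with $h'(0)=1$ into the Loynes min-of-partial-products formula with an $O(1)$ error, for which uniform concavity and $C^{1+}$ regularity of $h$ near $0$ are used to bound how much $h$ deviates from the identity on $[0,\epsilon_0]$.

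The main obstacle I expect is precisely step (iii) combined with the subtlety that $\ppl$ is only measurable (a residual set of zeros) and genuinely non-continuous: one must make sure the Loynes representation $\log\ppl(\vartheta)=\min_{k\ge 0}S_k\log g(\vartheta)+O(1)$ holds for $m$-a.e.\ $\vartheta$ with an error that is uniformly bounded (not just finite a.e.), since an error growing with $\vartheta$ would destroy the exponential rate. The resolution is that the relevant dynamics only ever see the orbit while it is in $[0,a]$ and the multiplicative error per step, $h'(\xi)/\xi\cdot\xi = h(\xi)/\xi\in[h(a)/a,1]$, is uniformly bounded away from $0$ and $1$, so the cumulative error over the (random but finite) "contracting prefix" telescopes to something in $[(h(a)/a)^{?},1]$ — but one has to argue that the orbit does not spend an unbounded number of steps near $0$ accumulating error, which follows because once $S_k\log g$ drops well below its running minimum the fibre coordinate is genuinely tiny and $h$ acts like the identity there with multiplicative error $1-O(\epsilon)$, summable. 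Making this bound fully uniform in $\vartheta$ is the delicate part; everything downstream is standard thermodynamic formalism.
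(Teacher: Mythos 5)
Your approach is genuinely the one the paper takes at the conceptual level — the proof is explicitly modelled on the Loynes-exponent argument from queueing theory — but there is one concrete gap in the way you formulated the key reduction, and that gap is exactly the technical heart of the paper's proof.

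You claim the Loynes-type representation
$\log\ppl(\vartheta)=\min_{k\geqslant 0}\sum_{i=1}^k\log g(S^i\vartheta)+O(1)$
with a uniformly bounded error. This is false. The lower bound on $\ppl$ coming from the concavity estimate $h'(x)\geqslant e^{-a_hx}$ involves the factor $\exp(-a_h\sum_{i}x_{-i})$, and $\sum_i x_{-i}$ is \emph{not} uniformly bounded over $\vartheta$: if the sequence $g_n(\vartheta)$ drifts down slowly (say $\log g(S^i\vartheta)\equiv-\delta$ for $M/\delta$ steps and then recovers), the pullback orbit sits for $\approx M/\delta$ steps at heights $x_{-i}\approx ae^{-(M/\delta-i)\delta}$ whose sum is of order $a/\delta$, so the multiplicative error is of order $e^{-a_ha/\delta}$, unbounded as $\delta\to0$. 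So "the errors telescope to $O(1)$" cannot be made uniform in the realization, and an $x$-dependent additive error in the formula $\log\ppl\approx\min_k L_k$ would destroy the exponential rate exactly where it is needed.

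The paper's proof circumvents this by never trying to get an $O(1)$ error. Instead it proves, via Proposition~\ref{prop:distortion-1} and Corollary~\ref{coro:for-contradict} with $\alpha_i=e^{-i\delta}$, only the subexponentially weaker estimate: if $g_n(\vartheta)>\kappa e^{n\delta}$ for all $n$, then $\ppl(\vartheta)\geqslant C_\infty^{-1}\kappa$. Thus $\log\ppl(\vartheta)\geqslant\min_n(\log g_n(\vartheta)-n\delta)+O(1)$, with the $n\delta$ slack absorbing the unbounded error above. This is then combined with Lemma~\ref{lemma:A_kappa}, a Cram\'er bound at level $s\in(0,s_*)$ (not $s=s_*$), which makes the sum over $n$ geometrically convergent thanks to $\rho(\LL_s)e^{3s\delta}<1$; this avoids the separate tail-cutoff argument you would need if you set $s=s_*$ directly, where the transfer-operator terms do not decay in $k$. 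The exponent $s_*$ is only recovered in the limit $s\uparrow s_*$ at the very end. For the lower estimate the paper does not need any Loynes formula at all: it uses only the trivial bound $\ppl(\vartheta)\leqslant\varphi_n(\vartheta)\leqslant ag_n(\vartheta)$ and a standard large-deviations lower bound for the Birkhoff sums $\log g_n$ at the scale $n\approx x/\psi'(s_*)$, so the whole concavity machinery is needed only for the upper estimate. If you replace your uniform-$O(1)$ claim by the weaker subexponential bound with the $e^{n\delta}$ slack (and run the Chernoff bound at $s<s_*$), the rest of your outline goes through.
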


For the local analysis of $\ppl$ (see section~\ref{subsec:local-scaling}) we also need a modification of this last identity. Define
\begin{equation}
\Xi_\epsilon:=\frac{1}{\epsilon}\int_\T\min\{\ppl(t),\epsilon\}\,dt
\quad(\epsilon>0)\co
\end{equation}
so that
\begin{equation}
1-\Xi_\epsilon=\frac{1}{\epsilon}\int_\T(\epsilon-\ppl(t))^+\,dt\fs
\end{equation}

\begin{theorem}\label{theo:global-scaling}
\begin{equation}\label{eq:global-scaling}
\lim_{\epsilon\to0}\frac{\log \Xi_\epsilon}{\log\epsilon}=0
\text{\quad and \quad}
\lim_{\epsilon\to0}\frac{\log(1- \Xi_\epsilon)}{\log\epsilon}={s_*}\fs
\end{equation}
\end{theorem}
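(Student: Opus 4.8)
The plan is to obtain both identities directly from Theorem~\ref{theo:tail} by squeezing $1-\Xi_\epsilon$ between two quantities of the form $m\{\ppl<c\,\epsilon\}$. The first identity is elementary and does not even need the full strength of Theorem~\ref{theo:tail}. Since $\ppl>0$ $m$-a.e.\ and $\min\{\ppl(t),\epsilon\}=\epsilon$ whenever $\ppl(t)\geqslant\epsilon$, one has $m\{\ppl\geqslant\epsilon\}\leqslant\Xi_\epsilon\leqslant1$; as $m\{\ppl\geqslant\epsilon\}\to1$ when $\epsilon\to0$ (by Hypothesis~\ref{hypo:3}), this forces $\Xi_\epsilon\to1$, so that $\log\Xi_\epsilon\to0$ while $\log\epsilon\to-\infty$, whence $\log\Xi_\epsilon/\log\epsilon\to0$. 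Before turning to the second part I would record that $1-\Xi_\epsilon>0$ for all small $\epsilon$: Theorem~\ref{theo:tail} gives $m\{\ppl<\epsilon\}>0$ for all sufficiently small $\epsilon$, and on that set the integrand $(\epsilon-\ppl)^+$ is positive, so $1-\Xi_\epsilon=\frac1\epsilon\int_\T(\epsilon-\ppl(t))^+\,dt>0$; thus the logarithms below are well defined.

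For the second identity I would use, for an arbitrary fixed $\delta\in(0,1)$, the pointwise bounds
$$
(1-\delta)\,\epsilon\,\mathbf 1_{\{\ppl(t)<\delta\epsilon\}}
\;\leqslant\;(\epsilon-\ppl(t))^+\;\leqslant\;\epsilon\,\mathbf 1_{\{\ppl(t)<\epsilon\}}\co
$$
which integrate to $(1-\delta)\,m\{\ppl<\delta\epsilon\}\leqslant1-\Xi_\epsilon\leqslant m\{\ppl<\epsilon\}$. Taking logarithms and dividing by $\log\epsilon<0$ (which reverses the inequalities) gives
$$
\frac{\log m\{\ppl<\epsilon\}}{\log\epsilon}
\;\leqslant\;\frac{\log(1-\Xi_\epsilon)}{\log\epsilon}\;\leqslant\;
\frac{\log(1-\delta)}{\log\epsilon}
+\frac{\log m\{\ppl<\delta\epsilon\}}{\log(\delta\epsilon)}\cdot\frac{\log(\delta\epsilon)}{\log\epsilon}\fs
$$
Now let $\epsilon\to0$ with $\delta$ fixed: the left-hand side tends to ${s_*}$ by Theorem~\ref{theo:tail}, and on the right-hand side $\log(1-\delta)/\log\epsilon\to0$, $\log(\delta\epsilon)/\log\epsilon\to1$, and $\log m\{\ppl<\delta\epsilon\}/\log(\delta\epsilon)\to{s_*}$ (Theorem~\ref{theo:tail} applied along the sequence $\delta\epsilon\to0$), so the right-hand side also tends to ${s_*}$. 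The squeeze then yields $\lim_{\epsilon\to0}\log(1-\Xi_\epsilon)/\log\epsilon={s_*}$.

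I do not expect any genuine obstacle here: the statement is a soft corollary of the tail estimate of Theorem~\ref{theo:tail}, and the only points requiring a moment's care are the well-definedness of $\log(1-\Xi_\epsilon)$ for small $\epsilon$ and the bookkeeping with the sign of $\log\epsilon$. The parameter $\delta$ serves solely to absorb the multiplicative constant in the lower bound via the rescaling $\epsilon\mapsto\delta\epsilon$, which is harmless since rescaling the argument by a fixed positive factor does not affect the logarithmic scaling exponent.
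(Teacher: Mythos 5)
Your proof is correct and follows essentially the same route as the paper's: squeeze $1-\Xi_\epsilon$ between $c\,m\{\ppl<c'\epsilon\}$ and $m\{\ppl<\epsilon\}$ and invoke Theorem~\ref{theo:tail}. The paper simply fixes $\delta=\tfrac12$ where you keep $\delta$ arbitrary, and derives the first limit from the second (since $s_*>0$ forces $\Xi_\epsilon\to1$) rather than proving it separately; these are cosmetic differences only.
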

\noindent The proofs of (slight generalisations of) these two theorems are provided in 
section~\ref{sec:proof-global-dist}.

\subsection{Local scaling properties}\label{subsec:local-scaling}

As in \cite{Podvigina-Ashwin2011} we define a local stability index $\sigma(\vartheta)$ of the invariant graph $\ppl$ in the following way:
\begin{equation}
\sigma(\vartheta):=\sigma_+(\vartheta)-\sigma_-(\vartheta)
\end{equation}
where
\begin{equation}\label{eq:local-scaling}
\sigma_-(\vartheta):=\lim_{\epsilon\to0}\frac{\log\Sigma_\epsilon(\vartheta)}{\log\epsilon}\quad\text{and}\quad
\sigma_+(\vartheta):=\lim_{\epsilon\to0}\frac{\log(1-\Sigma_\epsilon(\vartheta))}{\log\epsilon}
\end{equation}
with
\begin{equation}
\Sigma_\epsilon(\vartheta):=
\frac1{\epsilon\cdot|U_\epsilon(\vartheta)|}\int_{U_\epsilon(\vartheta)}
\min\{\ppl(t),\epsilon\}\,dt
\end{equation}
and
\begin{equation}
1-\Sigma_\epsilon(\vartheta)=
\frac{1}{\epsilon\cdot|U_\epsilon(\vartheta)|}
\int_{U_\epsilon(\vartheta)}(\epsilon-\ppl(t))^+\,dt\fs
\end{equation}
The $U_\epsilon(\vartheta):=(v-\epsilon,v+\epsilon)$ are symmetric interval neighbourhoods of $\vartheta$ of size $2\epsilon$.
% Indeed, Podvigina and Ashwin use $U_\epsilon(\vartheta)=(\vartheta-\epsilon,\vartheta+\epsilon)$, but we need a bit more flexibility in the choice of these neighbourhoods. The precise assumptions on the $U_\epsilon(\vartheta)$ are stated in the following theorems.

Of course, the limits in (\ref{eq:local-scaling}) need not exist \emph{a priori}, but sufficient conditions for their existence are formulated in Theorem~\ref{theo:main}. If $\sigma_+(v)$ and $\sigma_-(v)$ both exist, they are non-negative and at most one of them can be strictly positive.

For $\theta\in\Theta$ we define $\hat{{\sigma}}_\pm(\theta)={\sigma}_\pm(\Pi\theta)$.

\begin{proposition}
$\hat{{\sigma}}_\pm(\hS\theta)=\hat{{\sigma}}_\pm(\theta)$ for all $\theta\in\Theta$.
\end{proposition}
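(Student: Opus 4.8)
The plan is to relate the scaling quantities $\Sigma_\epsilon$ at the points $\vartheta$ and $S\vartheta$ using the invariance of the graph $\ppl$ under the fibre dynamics. The key structural fact is that $\ppl$ satisfies the invariance equation $\ppl(S\vartheta)=g(S\vartheta)\,h(\ppl(\vartheta))$ for all $\vartheta\in\T$, equivalently $\ppl(\vartheta)=g(\vartheta)\,h(\ppl(S^{-1}\vartheta))$ (reading $S^{-1}$ along the appropriate branch). Since $h$ is $C^{1+}$ with $h(0)=0$, $h'(0)=1$ and $h$ concave, near $x=0$ we have $h(x)=x(1+O(x^{\beta}))$ for some Hölder exponent $\beta$, so $h$ is bi-Lipschitz on $[0,a]$ with $h(x)\leqslant x$ and $h(x)\geqslant cx$ for a constant $c>0$; combined with $0<g\leqslant\|g\|_\infty<\infty$ and $g$ bounded away from zero on each monotonicity interval, this means the map $x\mapsto g(\cdot)h(x)$ that carries the fibre over $\vartheta$ to the fibre over $S\vartheta$ is bi-Lipschitz with constants independent of $\vartheta$.

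The first step is to observe that because $S$ is a piecewise $C^{1+}$ expanding Markov map, the restriction of $S$ to a small enough neighbourhood $U_\epsilon(\vartheta)$ is a diffeomorphism onto its image, with derivative bounded above and below (bounded distortion, Remark~\ref{remark:distortion}); hence $S(U_\epsilon(\vartheta))$ is comparable to $U_{\epsilon'}(S\vartheta)$ with $\epsilon'\asymp\epsilon$, up to multiplicative constants $\lambda^{-1}\leqslant\epsilon'/\epsilon\leqslant\Lambda$ depending only on $S$. Then I would perform the change of variables $t\mapsto S t$ in the integral defining $\Sigma_\epsilon(\vartheta)$, using $\ppl(St)=g(St)h(\ppl(t))$, to rewrite $\int_{U_\epsilon(\vartheta)}\min\{\ppl(t),\epsilon\}\,dt$ in terms of $\int_{S U_\epsilon(\vartheta)}\min\{\ppl(s),\delta(s)\}\,\frac{ds}{|S'(\cdot)|}$ for a comparable threshold $\delta(s)\asymp\epsilon$, and similarly for $1-\Sigma_\epsilon$. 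The Jacobian factor $1/|S'|$, the factor $1/|U_\epsilon(\vartheta)|$, and the comparability of neighbourhoods all contribute only bounded multiplicative constants, so that one obtains a two-sided bound
\begin{equation*}
C^{-1}\,\Sigma_{C^{-1}\epsilon}(S\vartheta)\leqslant\Sigma_\epsilon(\vartheta)\leqslant C\,\Sigma_{C\epsilon}(S\vartheta)
\end{equation*}
for a constant $C>1$ independent of $\vartheta$ and $\epsilon$, and an analogous chain for $1-\Sigma_\epsilon$.

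Taking $\log$, dividing by $\log\epsilon<0$ and letting $\epsilon\to0$, the constants $C$ disappear in the limit because $\frac{\log C}{\log\epsilon}\to0$, and $\frac{\log(C\epsilon)}{\log\epsilon}\to1$; hence $\sigma_\pm(S\vartheta)$ exists if and only if $\sigma_\pm(\vartheta)$ exists, and in that case they are equal. Since $\hat\sigma_\pm(\theta)=\sigma_\pm(\Pi\theta)$ and $S\circ\Pi=\Pi\circ\hS^{-1}$, this gives $\hat\sigma_\pm(\hS^{-1}\theta)=\sigma_\pm(S\Pi\theta)=\sigma_\pm(\Pi\theta)=\hat\sigma_\pm(\theta)$ for all $\theta$, and replacing $\theta$ by $\hS\theta$ yields the claimed $\hat\sigma_\pm(\hS\theta)=\hat\sigma_\pm(\theta)$. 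The main obstacle, and the step deserving the most care, is the change-of-variables estimate near the set $\{\ppl=0\}$: one must be sure that the Markov property is genuinely used so that $S|_{U_\epsilon(\vartheta)}$ is injective for small $\epsilon$ even when $\vartheta$ is near a branch point (which is handled by shrinking $\epsilon$, or by splitting $U_\epsilon(\vartheta)$ into at most two monotonicity pieces and noting this only costs another bounded constant), and that the threshold $\epsilon$ transforms into something genuinely comparable to $\epsilon$ rather than something that degenerates — which is exactly what the uniform bounds on $g$ and $h'$ near $0$ guarantee.
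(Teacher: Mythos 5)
Your approach is genuinely different from the paper's, which dispatches the proposition in one sentence by citing Theorem~2.2 of \cite{Podvigina-Ashwin2011} (invariance of stability indices along orbits) and observing that the proof there applies to any forward-and-backward invariant set. The paper's route operates with the skew product $F$ on $\Omega$ directly, using that $F$ fixes the section $\Theta\times\{0\}$ and carries small cubes at $(\theta,0)$ to bi-Lipschitz images near $(\hS\theta,0)$. You instead descend to the non-invertible one-dimensional factor $S$ and perform the change of variables by hand, which is self-contained and makes the mechanism transparent, at the cost of having to control the branches of $S$.

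Two things to repair. First, the invariance relation is stated with the wrong orientation: from $F(\theta,\hppl(\theta))=(\hS\theta,\hppl(\hS\theta))$ and $S\circ\Pi=\Pi\circ\hS^{-1}$ one obtains $\ppl(\vartheta)=g(S\vartheta)\,h(\ppl(S\vartheta))$ (the pullback form), not $\ppl(S\vartheta)=g(S\vartheta)\,h(\ppl(\vartheta))$. This is harmless for your argument, since you only use that $\ppl(\vartheta)$ and $\ppl(S\vartheta)$ are comparable up to multiplicative constants, and that follows either way because $h$ is bi-Lipschitz on $[0,a]$ (with constants $h'(a)$ and $1$) and $g$ is bounded away from $0$ and $\infty$; but the displayed identity should be corrected. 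Second, the branch-point handling is not quite right: at an endpoint $\vartheta$ of a monotonicity interval, $S$ may be discontinuous, so the two halves of $U_\epsilon(\vartheta)$ can be mapped to disjoint, distant intervals and ``$S\vartheta$'' itself is ambiguous -- ``splitting into two pieces costs a bounded constant'' does not cover this case. Since the set of such $\vartheta$ is finite, you can simply exclude those $\theta$ for which $\Pi\hS\theta$ lies in this exceptional set (consistent with the restriction to regular points in Theorem~\ref{theo:main}); alternatively, run the argument upstairs on $\Omega$ with $F$, as the paper effectively does. With those two fixes, the two-sided estimate on $\Sigma_\epsilon(\vartheta)$, the elimination of the multiplicative constants under $\log(\cdot)/\log\epsilon$ as $\epsilon\to0$, and the final reduction via $\hat\sigma_\pm(\theta)=\sigma_\pm(\Pi\theta)$ together with $S\circ\Pi\circ\hS=\Pi$ are all correct.
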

\noindent This is essentially Theorem 2.2 of \cite{Podvigina-Ashwin2011}. Observe just that the proof of that theorem applies to any forward and backward invariant set.

\begin{corollary}
For each ergodic $\hS$-invariant measure $\hat\mu$ the function $\hat{{\sigma}}_\pm$ is $\hat\mu$-a.s. constant.
\end{corollary}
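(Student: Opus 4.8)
The plan is to deduce the Corollary directly from the preceding Proposition together with the ergodic theorem, so the ``proof'' is really just a short observation. First I would recall that the Proposition asserts $\hat\sigma_\pm(\hS\theta)=\hat\sigma_\pm(\theta)$ for \emph{every} $\theta\in\Theta$, i.e. $\hat\sigma_\pm$ is a pointwise (everywhere, not just almost everywhere) $\hS$-invariant function. In particular it is invariant modulo any $\hS$-invariant measure $\hat\mu$. Next I would note that $\hat\sigma_\pm$ takes values in $[0,\infty]$ (or is declared $\pm\infty$ / undefined on the exceptional set where the limits fail to exist) and is measurable — measurability of $\theta\mapsto\Sigma_\epsilon(\Pi\theta)$ in $\theta$ follows from measurability of $\ppl$ and of $\Pi$, hence the pointwise $\liminf$/$\limsup$ defining $\sigma_\pm$ are measurable, and so is the set where $\liminf=\limsup$.

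The core step is then the standard fact that a measurable function which is invariant under an ergodic measure-preserving transformation is almost surely constant: apply this to $\hat\mu$ and to $\hat\sigma_+$, and separately to $\hat\sigma_-$. Concretely, for each $c\in\R$ the level set $\{\hat\sigma_+\le c\}$ is $\hS$-invariant by the Proposition, hence has $\hat\mu$-measure $0$ or $1$ by ergodicity; letting $c$ run through a countable dense set pins down a single value $c_+\in[0,\infty]$ with $\hat\mu\{\hat\sigma_+=c_+\}=1$, and likewise $c_-$ for $\hat\sigma_-$. Since the Proposition's invariance is genuinely pointwise, one need not even worry about the usual nuisance of modifying the function on a null set to get strict invariance.

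One small point worth addressing explicitly is the set $N\subset\T$ on which the limits in (\ref{eq:local-scaling}) may fail to exist, so that $\sigma_\pm$ — and hence $\hat\sigma_\pm$ — is a priori only defined off $\Pi^{-1}N$. I would observe that $\Pi^{-1}N$ is itself forward and backward $\hS$-invariant (because, as noted after the Proposition, the argument of \cite{Podvigina-Ashwin2011} applies to the $\liminf$ and $\limsup$ separately, so the equality-of-$\liminf$-and-$\limsup$ set is invariant), hence by ergodicity $\hat\mu(\Pi^{-1}N)\in\{0,1\}$; the statement of the Corollary is vacuous in the case of measure $1$, and in the case of measure $0$ the argument of the previous paragraph goes through on the full-measure complement. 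There is essentially no obstacle here — the only thing to be a little careful about is the measurability/invariance of this exceptional set, which is immediate from the cited invariance argument — so the proof will be a two or three sentence reduction to ergodicity.

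\begin{proof}
By the preceding Proposition, $\hat\sigma_+$ and $\hat\sigma_-$ are $\hS$-invariant functions, defined and measurable on the forward and backward $\hS$-invariant set $\Theta\setminus\Pi^{-1}N$, where $N\subseteq\T$ is the (measurable, by upper semicontinuity of $\ppl$) set of $\vartheta$ at which one of the limits in (\ref{eq:local-scaling}) fails to exist. If $\hat\mu(\Pi^{-1}N)=1$ there is nothing to prove. Otherwise $\hat\mu(\Pi^{-1}N)=0$ by ergodicity, and on the full-measure invariant set $\Theta\setminus\Pi^{-1}N$ each level set $\{\hat\sigma_\pm\leqslant c\}$ ($c\in\R$) is $\hS$-invariant, hence of $\hat\mu$-measure $0$ or $1$. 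Letting $c$ range over $\mathbb{Q}$ shows that $\hat\sigma_+$ and $\hat\sigma_-$ each agree $\hat\mu$-almost surely with a constant.
\end{proof}
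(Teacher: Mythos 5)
Your proof is correct and follows exactly the route the paper intends: the Corollary is the immediate consequence of the Proposition's pointwise $\hS$-invariance of $\hat\sigma_\pm$ together with the standard fact that an invariant measurable function is a.s.\ constant under an ergodic invariant measure (the paper gives no proof because it regards this as immediate). Your additional care about measurability and the exceptional set where the limits fail to exist is sound and harmless, though not strictly needed.
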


%\begin{remark}
%From numerical simulations it seems likely that $\underline{\xi}=\overline{\xi}=C\,\log(\frac{r}{r_c})$ with a constant $C>0$ that depends on the properties of the distribution of $\log g(\vartheta)$ considered as a random variable on the probability space $(\T,\mathcal{B},m)$. In the theory of reflecting random walks with drift (dealt with e.g. in queuing theory), this exponent is known as Loynes' exponent, see e.g. \cite{Duffy2011}. In our setting it should be the postive zero $\theta^*$ of the pressure function $\theta\mapsto p(-\log|S'|+\theta\log g)$ under the dynamics of $S$. 
%\end{remark}

Recall from Hypotheis~\ref{hypo:2} that
$\mathcal{U}_n(\vartheta)$ denotes the family of all interval neighbourhoods $U$ of $\vartheta\in\T$ such that $S^n_{|U}:U\to S^nU$ is a diffeomorphism.
The following theorem is proved in section \ref{sec:proofs-local-scaling}.

\begin{theorem}\label{theo:main}
Let $\vartheta\in\T$ be \emph{regular} in the sense that
\begin{equation}\label{eq:Lambda-Gamma-ass}
\Gamma(\vartheta):=\lim_{n\to\infty}\frac{1}{n}\log g_n(\vartheta)\text{\quad and \quad}
\Lambda(\vartheta):=\lim_{n\to\infty}\frac{1}{n}\log|(S^n)'(\vartheta)|
\end{equation}
exist and that 
\begin{equation}\label{eq:extra-ass}
\parbox{0.85\linewidth}{there are sequences $n_1<n_2<\dots$ of integers and $U_{\epsilon_1}\supseteq U_{\epsilon_2}\supseteq\dots$ of \emph{symmetric} interval
neighbourhoods of $\vartheta$ with $U_{\epsilon_k}\in\mathcal{U}_{n_k}(\vartheta)$ such that 
\[
\lim_{k\to\infty}\frac{n_k}{n_{k+1}}=1\quad\text{and}\quad
\Delta:=\inf_{k\geqslant1}|S^{n_k}U_{\epsilon_k}|>0\fs
\]
}
\end{equation}

\begin{enumerate}[1.]
\item
If $\Gamma(\vartheta)+\Lambda(\vartheta)>0$, then
\begin{equation}\label{eq:sigma_+}
{\sigma}_+(\vartheta)
=
\frac{\Gamma(\vartheta)+\Lambda(\vartheta)}{\Lambda(\vartheta)}\cdot
{s_*}
\text{\quad and \quad}
\sigma_-(\vartheta)=0\fs
\end{equation}
\item If $\Gamma(\vartheta)+\Lambda(\vartheta)<0$, then
\begin{equation}\label{eq:sigma_-}
\sigma_-(\vartheta)=
-\frac{\Gamma(\vartheta)+\Lambda(\vartheta)}{\Lambda(\vartheta)}
\text{\quad and \quad}
\sigma_+(\vartheta)=0\fs
\end{equation}
\end{enumerate}

\end{theorem}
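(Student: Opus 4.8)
The plan is to reduce the local scaling behaviour of $\ppl$ near a regular point $\vartheta$ to the global scaling behaviour described in Theorems~\ref{theo:tail} and \ref{theo:global-scaling}, transported through the dynamics $S^{n_k}$ and rescaled by the multiplier cocycle $g_{n_k}$. The starting observation is the self-similarity of the invariant graph under the fibre dynamics: for $U\in\mathcal{U}_n(\vartheta)$ and $t\in U$, the value $\ppl(S^nt)$ is obtained from $\ppl(t)$ by applying the $n$-step fibre map $F^n_t$, and since the fibre maps are concave with $h'(0)=1$ and bounded distortion for the linear part $g_n$ holds by Hypothesis~\ref{hypo:2}, we get comparability $\ppl(S^nt)\asymp g_n(t)\,\ppl(t)$ as long as $\ppl(t)$ is small — more precisely, for $\ppl(t)$ below a threshold the concave map behaves like multiplication by $g_n(t)$ up to the distortion constant $D$ and the constant $h(a)/a$ (this is exactly the mechanism already exploited in Proposition~\ref{prop:anosov-lemma}). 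Combining this with the bounded distortion estimate for $(S^n)'$ from Remark~\ref{remark:distortion}, the pushforward of Lebesgue measure restricted to $U_{\epsilon_k}$ under $S^{n_k}$ is comparable to Lebesgue measure on $S^{n_k}U_{\epsilon_k}$, which by the hypothesis $\Delta>0$ is a definite-size interval.

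The key steps, in order: (i) Fix a regular $\vartheta$ and the sequences $n_k$, $U_{\epsilon_k}$, $S^{n_k}U_{\epsilon_k}$ from \eqref{eq:extra-ass}. Write $\Sigma_{\delta_k}(\vartheta)$ for a suitable $\delta_k\to 0$ and change variables $t\mapsto S^{n_k}t$ in the integral defining $\Sigma_{\delta_k}(\vartheta)$. Using $\ppl(t)\asymp g_{n_k}(t)^{-1}\ppl(S^{n_k}t)$ and $|dt|\asymp |(S^{n_k})'(t)|^{-1}|dS^{n_k}t|$, and replacing $g_{n_k}(t)$ and $|(S^{n_k})'(t)|$ by their values $g_{n_k}(\vartheta)\approx e^{n_k\Gamma(\vartheta)}$ and $|(S^{n_k})'(\vartheta)|\approx e^{n_k\Lambda(\vartheta)}$ up to subexponential errors (here regularity of $\vartheta$ is used), one rewrites $1-\Sigma_{\delta_k}(\vartheta)$ in terms of $\int_{S^{n_k}U_{\epsilon_k}}(\epsilon_k' - \ppl(s))^+\,ds$ with $\epsilon_k' \approx g_{n_k}(\vartheta)\,\delta_k\approx e^{n_k\Gamma(\vartheta)}\delta_k$, times a normalising factor $\asymp (\delta_k |U_{\epsilon_k}|)^{-1}$ with $|U_{\epsilon_k}| = |S^{n_k}U_{\epsilon_k}|\cdot|(S^{n_k})'(\vartheta)|^{-1}\cdot(1+o(1))^{n_k}\asymp e^{-n_k\Lambda(\vartheta)}$. (ii) Since $S^{n_k}U_{\epsilon_k}$ contains an interval of size $\geqslant\Delta$ and is covered by $\T$, the local integral over $S^{n_k}U_{\epsilon_k}$ is comparable (up to the bounded-distortion-of-density constant and a fixed number of Markov images) to the global quantity $1-\Xi_{\epsilon_k'}$ from Theorem~\ref{theo:global-scaling}, which scales like $(\epsilon_k')^{{s_*}}$ up to subexponential factors. (iii) Put the exponents together: $1-\Sigma_{\delta_k}(\vartheta)\asymp (\delta_k e^{n_k\Gamma})^{{s_*}}\cdot \delta_k^{-1}e^{n_k\Lambda}$ times subexponential errors; taking logarithms and dividing by $\log\delta_k$, and using that along the subsequence $\delta_k$ and $n_k$ are linked by $\log\delta_k \approx -n_k\Lambda(\vartheta)$ (this is the natural choice making $U_{\epsilon_k}$ the $\delta_k$-neighbourhood — one sets $\epsilon_k$ of \eqref{eq:extra-ass} equal to the $\delta$ of the definition of $\Sigma_\delta$, up to the constant $\Delta$), gives $\sigma_+(\vartheta) = {s_*}(\Gamma+\Lambda)/\Lambda$ in case $\Gamma+\Lambda>0$, and correspondingly $\sigma_-(\vartheta) = -(\Gamma+\Lambda)/\Lambda$ in case $\Gamma+\Lambda<0$ (in that case $\epsilon_k'\to 0$ forces $\ppl<\epsilon_k'$ only on a set whose Lebesgue-small-ness is governed by the trivial bound $\Xi_{\epsilon'}\leqslant 1$, and the $\sigma_-$ scaling comes from the deterministic factor $\delta_k^{-1}e^{n_k(\Gamma+\Lambda)}$ alone). (iv) Finally, upgrade from the subsequence $\delta_k$ to the full limit $\epsilon\to 0$: for general small $\epsilon$ pick $k$ with $\delta_{k+1}\leqslant\epsilon\leqslant\delta_k$; monotonicity of $\epsilon\mapsto\Sigma_\epsilon$-type quantities together with $\lim n_k/n_{k+1}=1$ forces the exponents computed along $\delta_k$ and $\delta_{k+1}$ to agree in the limit, pinning the genuine limit.

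The main obstacle I expect is step (iv) together with the careful bookkeeping in step (i): one must control that all the multiplicative errors accumulated over $n_k$ steps — from bounded distortion of $g_{n_k}$ and $(S^{n_k})'$, from the concave-versus-linear comparison of the fibre maps, and from the gap between $g_{n_k}(t)$ near $\vartheta$ and its Lyapunov value $e^{n_k\Gamma(\vartheta)}$ — are genuinely subexponential in $n_k$, i.e. contribute $0$ after dividing by $\log\delta_k\asymp n_k$. The distortion errors are uniformly bounded (constant $D$), hence harmless; the concavity comparison is bounded by $\log(a/h(a))$, hence harmless; the delicate point is the Lyapunov convergence $\frac1{n}\log g_n(\vartheta)\to\Gamma(\vartheta)$ and $\frac1n\log|(S^n)'(\vartheta)|\to\Lambda(\vartheta)$, which holds by the regularity assumption \eqref{eq:Lambda-Gamma-ass} but only gives $o(n)$ control, so one must be sure never to multiply such an error by something growing in $n$. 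The condition $\lim_k n_k/n_{k+1}=1$ in \eqref{eq:extra-ass} is precisely what makes the subsequence-to-limit passage in step (iv) work, and $\Delta>0$ is what keeps the image intervals $S^{n_k}U_{\epsilon_k}$ from degenerating so that the global estimates of Theorems~\ref{theo:tail} and \ref{theo:global-scaling} can be invoked with uniform constants.
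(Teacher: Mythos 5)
Your high-level plan — telescope the local data through $S^{n_k}$, use bounded distortion of $g_n$ and $(S^n)'$, reduce to the localized version of Theorem~\ref{theo:global-scaling} on intervals of size $\geqslant\Delta$, and use $n_k/n_{k+1}\to1$ to interpolate from the subsequence to the full limit — is indeed how the paper argues. But the central comparability is written with the multiplier in the wrong direction. The invariance is $\ppl(t)=f_{n,t}(\ppl(S^nt))$ with $f_{n,t}'(0)=g_n(t)$, so under the relevant smallness conditions one gets $\ppl(t)\asymp g_n(t)\,\ppl(S^nt)$, i.e. $\ppl(S^nt)\asymp g_n(t)^{-1}\ppl(t)$ — the opposite of what you wrote twice. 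Consequently the rescaled threshold after the change of variables should be $\epsilon'_k\asymp\delta_k/g_{n_k}(\vartheta)$ (this is exactly the paper's $\underline\kappa_k,\overline\kappa_k$), not $g_{n_k}(\vartheta)\delta_k$. With your $\epsilon'_k=g_{n_k}(\vartheta)\delta_k$ one has $\log\epsilon'_k/\log\delta_k\to(\Lambda-\Gamma)/\Lambda$ rather than $(\Lambda+\Gamma)/\Lambda$, which would give $\sigma_+={s_*}(\Lambda-\Gamma)/\Lambda$, a wrong formula; and when $\Gamma>\Lambda$ (still allowed under $\Gamma+\Lambda>0$) your $\epsilon'_k$ does not even tend to $0$, so Theorem~\ref{theo:global-scaling} could not be invoked at all. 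The displayed asymptotic in your step (iii) also does not reduce to ${s_*}(\Gamma+\Lambda)/\Lambda$ when the arithmetic is carried out, so the stated conclusion and the stated derivation are internally inconsistent.

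Beyond the sign, the two cases need genuinely different mechanisms that your sketch does not separate. In case 1 the comparability $\ppl(t)\asymp g_n(t)\ppl(S^nt)$ "for $\ppl(t)$ small" is not a single smallness condition but a chain $\ppl(t)\leqslant C_n^{-1}\alpha_ig_i(\vartheta)$ for all $i\leqslant n$ with a geometric safety margin $\alpha_i=e^{-i\delta}$; establishing that regular points satisfy these conditions, and propagating them through the Jacobian of the telescoping map, is exactly the content of Propositions~\ref{prop:distortion-1}–\ref{prop:distortion-2} and of the paper's level-set construction $\delta_{n_k}(t)$ solving $f_{n_k,\tilde\vartheta}(\delta_{n_k}(t))=\epsilon_k$. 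In case 2, $g_{n_k}(\vartheta)\ll\epsilon_k$ makes $\ppl$ uniformly below $\epsilon_k$ on $U_{\epsilon_k}$, so $1-\Sigma_{\epsilon_k}\to1$ and $\sigma_+=0$ trivially, with no appeal to Theorem~\ref{theo:global-scaling}; and the distortion of the fibre maps can no longer be bounded by a summable series but only subexponentially — the paper's crucial auxiliary estimate $\sum_{i=1}^n f_{n-i,S^i\tilde\vartheta}(a)=o(n)$ (equation \eqref{eq:summable-1}) is what makes $\sigma_-$ accessible, and is not mentioned in your plan. Your verbal description of case 2 (that "$\epsilon'_k\to0$" and that "the deterministic factor $\delta_k^{-1}e^{n_k(\Gamma+\Lambda)}$" gives the exponent) neither matches the correct $\epsilon'_k$ nor simplifies to $-(\Gamma+\Lambda)/\Lambda$.
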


\begin{remark}[On the notion of regularity of a point $\vartheta$]\label{remark:regularity}\
\begin{enumerate}[a)]
\item
The set of points $\vartheta\in\T$ for which (\ref{eq:Lambda-Gamma-ass}) is violated has measure zero for each $S$-invariant measure by Birkhoff's Ergodic Theorem. Those points for for which (\ref{eq:extra-ass}) is violated have measure zero for each $S$-invariant Gibbs measure. Indeed, in section~\ref{subsec:regularity} we prove the stronger fact that the same is true for each $S$-invariant measure $\mu$ with the property that 
\begin{equation}\label{eq:weaker-than-Gibbs}
\mu(W_\epsilon)=\mathcal{O}\left(\left(\log\log\frac{1}{\epsilon}\right)^{-(1+q)}\right)
\quad\text{as $\epsilon\to0$ for some $q>0$}
\end{equation}
where $W_\epsilon$ is the $\epsilon$-neighbourhood of the set of endpoints of monotonicity intervals of $S$. (Observe that for each Gibbs measure $\mu$ there exists $t\in(0,1)$ such that $\mu(W_\epsilon)=\mathcal{O}(\epsilon^t)$, because $S$ is piecewise uniformly expanding.)
\item
If $S$ is an expanding $C^{1+}$-map of $\T$, then there is, for each $n\geqslant1$, a symmetric interval $U\in\mathcal{U}_n(v)$ with $|S^nU|=1$. Therefore (\ref{eq:extra-ass}) is satisfied for all $v\in\T$ in this case.
\item
If one replaces the symmetric intervals in the definition  of $\Sigma_\epsilon(v)$ by maximal monotonicity intervals, then (\ref{eq:extra-ass}) is satisfied for all Markov maps.

\end{enumerate}
\end{remark}

\begin{remark}
Numerical investigations related to equations (\ref{eq:sigma_+}) and (\ref{eq:sigma_-}) are presented in \cite{KJR2012}.
\end{remark}

\begin{remark}
In \cite{KeOt2012} we characterize the Hausdorff and packing dimension of the set $\{\theta\in\Theta: \hppl(\theta)=0\}$ and related ones using thermodynamic formalism for the map $S$. In other words, we study the local scaling behaviour of the set of zeros of $\hppl$. Theorems 1 - 3 extend this point of view in that they describe the local scaling behaviour of the subgraph of $\hppl$ in regions where $\hppl$ assumes values very close to zero.
\end{remark}

\subsection{The Anosov case}
In Example~\ref{ex:anosov} we described how Anosov surface diffeomorphisms driving a Hölder function $\hat g:\mathbb{T}^2\to(0,\infty)$ fit the general framework of this note. The basic observation is 
Proposition~\ref{prop:anosov-lemma} relating
the invariant graph $\hppl$ defined in (\ref{eq:hat_phi_def}) to its "one-sided" approximation $\ppl\circ\Pi$ which is the invariant graph for the system where the multiplier function $\hat{g}$ is replaced by $g\circ\Pi$.

Using Proposition~\ref{prop:anosov-lemma} and standard facts about Anosov surface diffeomorphisms, in particular that the stable and the unstable foliation are uniformly transversal and $C^{1+}$ \cite[Theorem III.3.1]{Mane1987}, 
one can deduce the following theorem from the results of the previous two subsections.

Recall from Example~\ref{ex:anosov} that $\hat{\mu}_{\text{SRB}}^-$ is the Sinai-Ruelle-Bowen measure of $\hS^{-1}$ and denote by $\hat\psi(t)$ the topological pressure of 
$-\log|D_u\hS^{-1}|-t\log\hat{g}$ under $\hS^{-1}$. As $\log\hat{g}$ is cohomologous to $\log g\circ\Pi$ by (\ref{eq:log_g-cohom}) and 
$\log|D_u\hS^{-1}|$ to $\log|S'|\circ\Pi$ by (\ref{eq:log_DS-cohom}), we have
\begin{equation}
\hat{\psi}(s)=\psi(s)\quad\text{and}\quad \hat{\psi}'(0)=-\hat{\mu}_{\text{SRB}}^-(\log\hat{g})
=-\mac(\log g)<0
\end{equation}
so that the zero ${s_*}>0$ of $\psi$ defined in section~\ref{subsec:global-sc} 
is at the same time the unique positive zero of $\hat{\psi}$.
\begin{theorem}
Let $\Theta=\mathbb{T}^2$ and let $\hS:\mathbb{T}^2\to\mathbb{T}^2$ be a $C^2$ Anosov diffeomorphism.  Suppose that $g:\mathbb{T}^2\to(0,\infty)$ is Hölder continuous. Then
the invariant graph $\hppl$ has the following properties:
\begin{enumerate}[1.]
\item
\begin{equation}\label{eq:anosov-th-1}
\lim_{\epsilon\to0}\frac{\log m^2\{\hppl<\epsilon\}}{\log\epsilon}={s_*}\fs
\end{equation}
\item
\begin{equation}\label{eq:anosov-th-2}
\lim_{\epsilon\to0}\frac{\log \hat\Xi_\epsilon}{\log\epsilon}=0
\text{\quad and \quad}
\lim_{\epsilon\to0}\frac{\log(1- \hat\Xi_\epsilon)}{\log\epsilon}={s_*}
\end{equation}
where
$\hat\Xi_\epsilon:=\frac{1}{\epsilon}\int_{\mathbb{T}^2}\min\{\hppl,\epsilon\}\,dm^2$ $(\epsilon>0)$,
so that
$1-\hat\Xi_\epsilon=\frac{1}{\epsilon}\int_{\mathbb{T}^2}(\epsilon-\hppl)^+\,dm^2$.
\end{enumerate}
\ \\[-2mm]
Furthermore, there is a measurable subset $\Theta_0\subseteq\Theta$, which has measure zero for each Gibbs measure of $T$, such that for each $\theta\in\Theta\setminus\Theta_0$ the limits
\begin{equation}\label{eq:Anosov-limits}
\hat{\Gamma}(\theta):=\lim_{n\to\infty}\frac{1}{n}\log\hat{g}_n(\theta)\quad\text{and}\quad \hat{\Lambda}(\theta):=\lim_{n\to\infty}\frac{1}{n}\log|D_u\hS^{-n}(\theta)|
\end{equation}
exist and satisfy
$\hat{\Gamma}(\theta)=\Gamma(\Pi\theta)$ and $\hat{\Lambda}(\theta)=\Lambda(\Pi\theta)$, and the following holds:
\begin{enumerate}[1.]
\setcounter{enumi}{2}
\item
If $\hat\Gamma(\theta)+\hat\Lambda(\theta)>0$, then
\begin{equation}\label{eq:anosov-lemma-3}
\lim_{\epsilon\to0}\frac{\log(1-\Sigma_\epsilon(\theta))}{\log\epsilon}
=
\frac{\hat\Gamma(\theta)+\hat\Lambda(\theta)}{\hat\Lambda(\theta)}\cdot
{s_*}
\text{\quad and \quad}
\lim_{\epsilon\to0}\frac{\log\Sigma_\epsilon(\theta)}{\log\epsilon}=0
\end{equation}
where 
$\Sigma_\epsilon(\theta):=
\frac1{\epsilon\cdot|U_\epsilon(\theta)|}\int_{U_\epsilon(\theta)}
\min\{\hppl,\epsilon\}\,dm^2$ and $U_\epsilon(\theta)$ is a $\epsilon$-neighbourhood of $\theta$ in $\mathbb{T}^2$, so that
$1-\Sigma_\epsilon(\theta)=
\frac1{\epsilon\cdot|U_\epsilon(\theta)|}\int_{U_\epsilon(\theta)}
(\epsilon-\hppl)^+\,dm^2$.
\item 
If $\hat\Gamma(\theta)+\hat\Lambda(\theta)<0$, then
\begin{equation}\label{eq:anosov-lemma-4}
\lim_{\epsilon\to0}\frac{\log\Sigma_\epsilon(\theta)}{\log\epsilon}
=
-\frac{\hat\Gamma(\theta)+\hat\Lambda(\theta)}{\hat\Lambda(\theta)}
\text{\quad and \quad}
\lim_{\epsilon\to0}\frac{\log(1-\Sigma_\epsilon(\theta))}{\log\epsilon}=0\fs
\end{equation}

\end{enumerate}

\end{theorem}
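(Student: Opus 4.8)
The plan is to deduce all four assertions from the one-dimensional results for $S$ and $\ppl$ (Theorems~\ref{theo:tail}, \ref{theo:global-scaling}, \ref{theo:main}) via Proposition~\ref{prop:anosov-lemma} and the geometry of the invariant foliations. Write $C:=\frac{a}{h(a)}\,e^{2\|\hat{b}\|_\infty}$. By Proposition~\ref{prop:anosov-lemma}, $C^{-1}\ppl(\Pi\theta)\leqslant\hppl(\theta)\leqslant C\,\ppl(\Pi\theta)$ whenever $\hppl(\theta)>0$, and $\{\hppl=0\}=\Pi^{-1}\{\ppl=0\}$. Using the elementary facts $\min\{Cy,\epsilon\}\leqslant C\min\{y,\epsilon\}$, $\min\{C^{-1}y,\epsilon\}\geqslant C^{-1}\min\{y,\epsilon\}$ and $(\epsilon-Cy)^+=C\,(C^{-1}\epsilon-y)^+$, this yields, for every Borel $A\subseteq\mathbb{T}^2$,
\[
C^{-1}\int_A\min\{\ppl\circ\Pi,\epsilon\}\,dm^2
\;\leqslant\;
\int_A\min\{\hppl,\epsilon\}\,dm^2
\;\leqslant\;
C\int_A\min\{\ppl\circ\Pi,\epsilon\}\,dm^2 ,
\]
together with $\int_A(\epsilon-\hppl)^+\,dm^2$ being squeezed between $C\int_A(C^{-1}\epsilon-\ppl\circ\Pi)^+\,dm^2$ and $C^{-1}\int_A(C\epsilon-\ppl\circ\Pi)^+\,dm^2$, and $\Pi^{-1}\{\ppl<C^{-1}\epsilon\}\subseteq\{\hppl<\epsilon\}\subseteq\Pi^{-1}\{\ppl<C\epsilon\}$. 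Thus every quantity appearing in the theorem is, up to a bounded multiplicative factor and a bounded change of the scale $\epsilon$, the $\Pi$-pullback of the corresponding quantity for $\ppl$; since each assertion is a limit of the form $\lim_{\epsilon\to0}\log(\cdot)/\log\epsilon$, such bounded factors and bounded scale changes are immaterial.

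The geometric input — where the $C^{1+}$ uniform transversality of the stable and unstable foliations of $\hS$ and the construction of $S,\Pi,\varsigma$ in Section~\ref{subsec:Anosov-Markov} are used — consists of three facts: (i) $m^2\circ\Pi^{-1}$ is equivalent to Lebesgue measure $m$ on $\T$ with density bounded above and below; (ii) for every $\theta\in\mathbb{T}^2$ and $\epsilon>0$ one has $U_{c_1\epsilon}(\Pi\theta)\subseteq\Pi(U_\epsilon(\theta))\subseteq U_{c_2\epsilon}(\Pi\theta)$ with $0<c_1<c_2$ independent of $\theta,\epsilon$; and (iii) for every non-negative measurable $f:\T\to\R$, $\int_{U_\epsilon(\theta)}f(\Pi\,\cdot)\,dm^2$ lies between bounded multiples of $\epsilon\int_{U_{c_1\epsilon}(\Pi\theta)}f\,dm$ and $\epsilon\int_{U_{c_2\epsilon}(\Pi\theta)}f\,dm$. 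All three follow by passing, near each point, to the $C^{1+}$ chart straightening the unstable foliation to horizontal lines — in which $\Pi$ becomes, up to a $C^{1+}$ bi-Lipschitz reparametrisation, the horizontal projection and $m^2$ a bounded-density multiple of $du\,dv$ with $v$ along stable leaves — by sandwiching the Euclidean $\epsilon$-ball between two foliation boxes of $u$- and $v$-extent comparable to $\epsilon$, and by Fubini; the merely Hölder regularity of $\varsigma$ is irrelevant since only comparability of measures and scales is used. Combining (i) with the set- and integral inequalities of the first paragraph, $m^2\{\hppl<\epsilon\}$ is squeezed between bounded multiples of $m\{\ppl<C^{\mp1}\epsilon\}$, so statement~1 follows from Theorem~\ref{theo:tail}; likewise $\hat\Xi_\epsilon$ and $1-\hat\Xi_\epsilon$ are squeezed between bounded multiples of $\Xi_{C^{\mp1}\epsilon}$ and $1-\Xi_{C^{\pm1}\epsilon}$ respectively, so statement~2 follows from Theorem~\ref{theo:global-scaling}.

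For statements~3 and~4, first handle the exponents. Summing $\log\hat{g}=\log g\circ\Pi+\hat{b}-\hat{b}\circ\hS^{-1}$ along the $\hS^{-1}$-orbit telescopes to $\log\hat{g}_n(\theta)=\log g_n(\Pi\theta)+\hat{b}(\hS^{-1}\theta)-\hat{b}(\hS^{-n-1}\theta)$, and (\ref{eq:log_DS-cohom}) telescopes to $\log|D_u\hS^{-n}(\theta)|=\log|(S^n)'(\Pi\theta)|+\log D\Pi(\theta)-\log D\Pi(\hS^{-n}\theta)$; since $\hat{b}$ is bounded and $\log D\Pi$ is bounded (a $C^{1+}$ transversal foliation on a compact surface has stable holonomies with derivatives bounded above and below), $\hat\Gamma(\theta)=\Gamma(\Pi\theta)$ and $\hat\Lambda(\theta)=\Lambda(\Pi\theta)$ whenever either side exists. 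Put $\Theta_0:=\Pi^{-1}(N)$, where $N\subseteq\T$ is the union of the set on which the limits (\ref{eq:Lambda-Gamma-ass}) fail — null for every $S$-invariant measure by Birkhoff's theorem — and the (countable) set of points whose $S$-orbit meets a branch endpoint; since a Gibbs measure $\hat\mu$ of $\hS$ (equivalently of $\hS^{-1}$) pushes forward under $\Pi$ to a non-atomic $S$-invariant measure, $\hat\mu(\Theta_0)=\Pi_*\hat\mu(N)=0$. For $\theta\notin\Theta_0$ the point $\vartheta:=\Pi\theta$ satisfies (\ref{eq:Lambda-Gamma-ass}), while (\ref{eq:extra-ass}) holds automatically since $S$ is an expanding $C^{1+}$ map of $\T$ (Remark~\ref{remark:regularity}, part b)), so Theorem~\ref{theo:main} applies at $\vartheta$. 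Combining the first-paragraph inequalities with (ii)--(iii) and $|U_\epsilon(\theta)|\asymp\epsilon^2$, the two-dimensional $\Sigma_\epsilon(\theta)$ of the present theorem is squeezed, up to bounded factors and a bounded scale change, between the one-dimensional $\Sigma_{c_1\epsilon}(\vartheta)$ and $\Sigma_{c_2\epsilon}(\vartheta)$ of Section~\ref{subsec:local-scaling}, and likewise $1-\Sigma_\epsilon(\theta)$ between bounded multiples of $1-\Sigma_{c_j\epsilon}(\vartheta)$; dividing logarithms by $\log\epsilon$ and letting $\epsilon\to0$, the factors and scale changes wash out, and by Theorem~\ref{theo:main} the limits equal $\sigma_-(\vartheta)$ and $\sigma_+(\vartheta)$. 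Inserting the values (\ref{eq:sigma_+})--(\ref{eq:sigma_-}) and replacing $\Gamma(\vartheta),\Lambda(\vartheta)$ by $\hat\Gamma(\theta),\hat\Lambda(\theta)$ gives (\ref{eq:anosov-lemma-3})--(\ref{eq:anosov-lemma-4}).

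The only genuinely non-bookkeeping step, hence the main obstacle, is the geometric comparison lemma (i)--(iii): establishing that the stable-holonomy projection $\Pi$ and the two Lebesgue measures interact, uniformly over all scales, as a bounded-distortion quotient map. Everything else reduces to manipulations with the single constant $C$, elementary inequalities for $\min\{\cdot,\epsilon\}$ and $(\epsilon-\cdot)^+$, telescoping of the two cohomological identities, and Birkhoff's theorem. One should only take care that the one-dimensional limits invoked from Theorems~\ref{theo:tail}, \ref{theo:global-scaling} and \ref{theo:main} are honest limits as $\epsilon\to0$, so that the squeeze between the scales $c_1\epsilon$ and $c_2\epsilon$ is legitimate.
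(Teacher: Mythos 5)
Your overall strategy is correct and coincides with the paper's: sandwich $\hppl$ between bounded multiples of $\ppl\circ\Pi$ via Proposition~\ref{prop:anosov-lemma}, sandwich two-dimensional integrals over $\Pi$-preimages against one-dimensional ones using the $C^{1+}$ absolute continuity of stable holonomy, and then read off all four statements from Theorems~\ref{theo:tail}--\ref{theo:main}. However, there is a genuine gap in your verification of regularity for statements~3 and~4.

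You choose $\Theta_0=\Pi^{-1}(N)$ where $N$ consists of the points where the Birkhoff limits (\ref{eq:Lambda-Gamma-ass}) fail together with the countable set of points whose $S$-orbit hits a branch endpoint, and you then claim that condition (\ref{eq:extra-ass}) holds automatically because ``$S$ is an expanding $C^{1+}$ map of $\T$ (Remark~\ref{remark:regularity}, part b))''. This is not the case for the factor map $S$ built from a Markov partition of an Anosov surface diffeomorphism in Section~\ref{subsec:Anosov-Markov}: that map is a \emph{piecewise} expanding, piecewise $C^{1+}$ Markov interval map with genuine branch endpoints, not a globally $C^{1+}$ expanding circle map, so part~b) of the remark does not apply. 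Merely avoiding the countable set of orbits that \emph{hit} an endpoint is far from enough: condition (\ref{eq:extra-ass}) requires the orbit of $\Pi\theta$ not to approach the branch endpoints too fast, in order that $\inf_k|S^{n_k}U_{\epsilon_k}|>0$ can be achieved along a subsequence $n_k$ with $n_{k+1}/n_k\to1$. The paper instead invokes Remark~\ref{remark:regularity}, part~a), which is established in Section~\ref{subsec:regularity} by a Borel--Cantelli argument and is tailored exactly to Gibbs measures of piecewise expanding Markov maps. To repair your proof, replace the appeal to part~b) by part~a), and define $\Theta_0$ so that it also contains $\Pi^{-1}$ of the measure-zero exceptional set produced by that argument (which is indeed $\mu$-null for every $S$-invariant Gibbs measure, hence $\hat\mu$-null for every Gibbs measure of $\hS$). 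The remaining steps of your argument -- the telescoping of the two cohomological identities, the comparison of $m^2$ with $m$ via the $C^{1+}$ stable holonomy, and the squeeze of the two-dimensional $\Sigma_\epsilon(\theta)$ between one-dimensional $\Sigma_{c_j\epsilon}(\Pi\theta)$ -- are correct and essentially match the paper.
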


\begin{proof}
The existence of the limits in (\ref{eq:Anosov-limits}) is again a consequence of Birkhoff's theorem.
The identities $\hat{\Gamma}(\theta)=\Gamma(\Pi\theta)$ and $\hat{\Lambda}(\theta)=\Lambda(\Pi\theta)$ follow from the fact that $\log\hat{g}$ is cohomologous to $\log g\circ\Pi$ and $\log|D_u\hS^{-1}|$ to $\log|S'|\circ\Pi$, see the discussion before the theorem. 
In view of Remark~\ref{remark:regularity}a we can choose $\Theta_0$ such that all points in $\Theta\setminus\Theta_0$ are regular in the sense of Theorem~\ref{theo:main}. Then
all other claims follow from Theorems~\ref{theo:tail}~-~\ref{theo:main} along the following lines:
Let $c=\frac{a}{h(a)}e^{2\|\hat{b}\|_\infty}$. Then
\begin{equation}
\Pi^{-1}\{\ppl<c^{-1}\epsilon\}
\subseteq
\{\hppl<\epsilon\}
\subseteq
\Pi^{-1}\{\ppl<c\epsilon\}
\end{equation}
and
\begin{equation}
c\,(c^{-1}\epsilon-\ppl\circ\Pi)^+
\leqslant
(\epsilon-\hppl)^+
\leqslant
c^{-1}(c\epsilon-\ppl\circ\Pi)^+
\end{equation}
because of Proposition~\ref{prop:anosov-lemma}. Therefore it suffices to prove 
(\ref{eq:anosov-th-1}) and (\ref{eq:anosov-th-2}) for the graph $\ppl\circ\Pi$ instead of $\hppl$. As $\ppl\circ\Pi$ is constant along local stable manifolds, and as the passage to local coordinates is absolutely continuous with bounded Jacobian determinant (see  \cite[Proposition 4.2]{Bressaud2002} for details), there is a constant $C>0$ such that
\begin{equation}
C^{-1}
\leqslant
\frac{m^2\{\ppl\circ\Pi<\epsilon\}}{m\{\ppl<\epsilon\}}\ ,\;
\frac{\int_{\mathbb{T}^2}(\epsilon-\ppl\circ\Pi)^+\,dm^2}{\int_{\mathbb{T}^1}(\epsilon-\ppl)^+\,dm}
\leqslant
C\fs
\end{equation}
Now (\ref{eq:anosov-th-1}) and (\ref{eq:anosov-th-2}) follow from Theorem~\ref{theo:tail} and~\ref{theo:global-scaling}, respectively.
With essentially the same arguments, (\ref{eq:anosov-lemma-3}) and (\ref{eq:anosov-lemma-4}) both follow from Theorem~\ref{theo:main}.
\end{proof}

\section{Distortion estimates}

\subsection{Branch distortion}

Recall that $\mathcal{U}_n(\vartheta)$ denotes the family of all interval neighbourhoods $U$ of $\vartheta\in\T$ such that $S^n_{|U}:U\to S^nU$ is a diffeomorphism.
%The real Koebe lemma for branches with negative Schwarzian derivative provides us with the following estimate:
%\begin{lemma}
%There is a constant $K>0$ such that for all $\vartheta\in\T$, all $x,y\in I$ with $0\leqslant x<y$ and all $n>0$
%\begin{equation*}
%0<(f_\vartheta^n)'(y)\leqslant(f_\vartheta^n)'(x)\leqslant
%K\,\left(\frac{f_\vartheta^n(a)}{f_\vartheta^n(a)-f_\vartheta^n(y)}\right)^{2}\cdot(f_\vartheta^n)'(y)
%\end{equation*} 
%\end{lemma}

The following proposition is most important for estimating distortions along single branches $F_\theta^n:I\to I$. It uses only the concavity of $h:I\to I$. As
\begin{equation}
0<c_h:=\min\{a,h'(a)\}\leqslant\min\{h'(x): x\in I\}\co
\end{equation}
there is a constant $a_h>0$ such that 
\begin{equation}
h'(x)\geqslant e^{-a_hx}\text{ for all }x\in I\fs
\end{equation}

We will also use the following notation: 
For $n\geqslant1$ and $\vartheta\in\T$ define $f_{n,\vartheta}:\T\to\T$ by $f_{1,\vartheta}(x)=g(S\vartheta)h(x)$ and $f_{n,\vartheta}(x)=f_{1,\vartheta}(f_{n-1,S\vartheta}(x))$ if $n>1$. Observe that $f_{n,\vartheta}(x)=f_{n-1,\vartheta}(f_{1,S^{n-1}\vartheta}(x))$. By definition, $f_{n,\vartheta}(x)$ is always to be interpreted as a point in the fibre over $\vartheta$.
Observe also that $F^n_{\hS^{-n}\theta}(x)=f_{n,\Pi\theta}(x)$ for all $n\geqslant1$ and $\theta\in\Theta$.

For fixed $n\in\N$ and $x\in I$ let
\begin{equation}
x_{-i}=f_{n-i,S^i\vartheta}(x)\; (i=1,\dots,n)\co
\end{equation}
and observe that $x_{-n}=x$ is a point in the fibre over $S^n\vartheta$, i.e. at time $-n$, while $x_0$ is a point in the fibre over $\vartheta$, i.e. at time $0$.
Note that we suppress the $n$-dependence of $x_i$ in this notation.

For a given sequence $(\alpha_i)_{i\geqslant1}$ of positive real numbers let $A_n=\sum_{i=1}^n\alpha_i$ and set $C_n=c_h^{-1}\,e^{a_hA_n}$. If the sequence is summable we extend this notation to $A_\infty=\sum_{i=1}^\infty\alpha_i$ and $C_\infty=c_h^{-1}\,e^{a_hA_\infty}$.

\begin{proposition}\label{prop:distortion-1}
Let $(\alpha_i)_{i\geqslant1}$ be a sequence of positive real numbers.
For all $n\in\N$, $\vartheta\in\T$ and $x\in I$,
\begin{equation}\label{eq:main-dist-1}
\exp\left(-a_h\,\sum_{i=1}^nx_{-i}\right)
\leqslant
\frac{f_{n,\vartheta}'(x)}{f_{n,\vartheta}'(0)}\leqslant 1\co
\end{equation}
and if 
\begin{equation}\label{eq:main-dist-ass-1}
x_0=f_{n,\vartheta}(x)\leqslant C_n^{-1}\,\alpha_i\,g_i(\vartheta)\;(i=1,\dots,n)\co
\end{equation} 
then
\begin{equation}\label{eq:main-dist-2}
\sum_{i=1}^nx_{-i}
\leqslant
C_n\,x_0\,\sum_{i=1}^ng_i(\vartheta)^{-1}
\leqslant
 A_n\fs
\end{equation}
\end{proposition}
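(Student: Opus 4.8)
The plan is to prove both inequalities in \eqref{eq:main-dist-2} essentially simultaneously by backward induction on the number of iterates, using \eqref{eq:main-dist-1} to control how much $f'_{k,\cdot}$ can decrease below its value at $0$, and then feeding the resulting bound on $\sum x_{-i}$ back into itself.

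First I would establish \eqref{eq:main-dist-1}. Writing $f_{n,\vartheta}=f_{1,\vartheta}\circ f_{n-1,S\vartheta}$ and iterating the chain rule, one gets
$f'_{n,\vartheta}(x)=\prod_{i=1}^n g(S^i\vartheta)\,h'(x_{-i})$ (with the convention that $x_{-i}$ is the point reached after applying the last $n-i$ fibre maps to $x$, so $x_0=f_{n,\vartheta}(x)$ sits over $\vartheta$), hence
$f'_{n,\vartheta}(x)/f'_{n,\vartheta}(0)=\prod_{i=1}^n h'(x_{-i})/h'(0)=\prod_{i=1}^n h'(x_{-i})$
since $h'(0)=1$. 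The upper bound $\leqslant 1$ is then immediate from concavity of $h$ (so $h'\leqslant h'(0)=1$), and the lower bound follows from $h'(x_{-i})\geqslant e^{-a_h x_{-i}}$, which is exactly the defining property of $a_h$; multiplying gives $\exp(-a_h\sum_i x_{-i})$.

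Next, for \eqref{eq:main-dist-2} I would exploit that each $x_{-i}$ is the image of $0$ (in the fibre over $S^i\vartheta$) under the map $f_{i,\vartheta}$ pushed slightly — more precisely, $x_{-i}=f_{i,\vartheta}(0)+\big(f_{i,\vartheta}(x_{-i-\cdots})-f_{i,\vartheta}(0)\big)$; the cleaner route is the mean value theorem: $x_0=f_{i,\vartheta}(x_{-i})$ and $f_{i,\vartheta}(0)=0$, so $x_0=f'_{i,\vartheta}(\xi)\,x_{-i}$ for some $\xi\in(0,x_{-i})$, whence $x_{-i}=x_0/f'_{i,\vartheta}(\xi)$. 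By the two-sided bound just proved (applied with $n$ replaced by $i$ and $x$ by $\xi$), $f'_{i,\vartheta}(\xi)\geqslant f'_{i,\vartheta}(0)\exp(-a_h\sum_{j=1}^i \xi_{-j})\geqslant g_i(\vartheta)\exp(-a_h\sum_{j=1}^i x_{-j})\geqslant g_i(\vartheta)\,e^{-a_h A_n}$, the last step using the a priori bound $\sum_j x_{-j}\leqslant A_n$ — but that is precisely what we are trying to prove, so this has to be run as an induction. So the argument is: assume inductively that $\sum_{j=1}^{i-1} x_{-j}\leqslant A_{i-1}$ (vacuous for $i=1$); then $f'_{i,\vartheta}(\xi)\geqslant g_i(\vartheta)e^{-a_h A_{i-1}}\geqslant g_i(\vartheta)e^{-a_h A_n}=c_h C_n^{-1} g_i(\vartheta)$, so $x_{-i}\leqslant C_n c_h^{-1} x_0/g_i(\vartheta)$; summing over $i$ up to $k$ and using hypothesis \eqref{eq:main-dist-ass-1}, which says $x_0\leqslant C_n^{-1}\alpha_i g_i(\vartheta)$, gives $\sum_{i=1}^k x_{-i}\leqslant c_h^{-1}\sum_{i=1}^k \alpha_i \leqslant c_h^{-1} A_k$. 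Here I would double-check the constant bookkeeping: one wants the closing inequality to land at $A_k$, not $c_h^{-1}A_k$, so the $c_h^{-1}$ in $C_n$ must be arranged to absorb it; comparing with the definition $C_n=c_h^{-1}e^{a_h A_n}$ and with \eqref{eq:main-dist-1}'s use of $h'\geqslant e^{-a_h x}$ rather than $h'\geqslant c_h e^{-a_h x}$, the factor $c_h$ is exactly what is gained, closing the induction with $\sum_{i=1}^k x_{-i}\leqslant A_k$ and in particular $\leqslant A_n$ for $k=n$. The first inequality $\sum x_{-i}\leqslant C_n x_0\sum g_i(\vartheta)^{-1}$ drops out of the same estimate $x_{-i}\leqslant C_n x_0/g_i(\vartheta)$ before invoking \eqref{eq:main-dist-ass-1}.

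The main obstacle I anticipate is getting the constants to close the induction tightly rather than with a slowly growing slack: since $C_n$ depends on $A_n$ which in turn bounds $\sum x_{-i}$, a naive estimate produces $A_n$ on both sides and the induction stalls unless the $c_h$ factor is tracked carefully. A secondary subtlety is the direction of iteration and which "tail" map $f_{i,\vartheta}$ acts on $x_{-i}$ — one must verify $f_{i,\vartheta}(x_{-i})=x_0$ and $f_{i,\vartheta}(0)=0$ using the composition identity $f_{n,\vartheta}(x)=f_{i,\vartheta}(f_{n-i,S^i\vartheta}(x))$ and $f_{n-i,S^i\vartheta}(0)=0$, so that the mean value theorem is being applied to the correct map over the correct fibre. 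Everything else is routine.
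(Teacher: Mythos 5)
Your proof is correct in outline and follows essentially the same route as the paper: part~\eqref{eq:main-dist-1} comes from the chain-rule product and $h'(y)\geqslant e^{-a_h y}$; part~\eqref{eq:main-dist-2} is reduced to the pointwise bound $x_{-i}\leqslant C_n\,x_0\,g_i(\vartheta)^{-1}\leqslant\alpha_i$, established by an induction on $i$ via $x_0=f_{i,\vartheta}(x_{-i})\geqslant x_{-i}f_{i,\vartheta}'(x_{-i})$ (your MVT version is equivalent, as $f_{i,\vartheta}'$ is decreasing) together with a lower bound on $f_{i,\vartheta}'$. So same decomposition, same induction.

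One step, however, is not written correctly as stated, and it is the heart of the bookkeeping. Applying~\eqref{eq:main-dist-1} as a black box gives
$f_{i,\vartheta}'(\xi)\geqslant g_i(\vartheta)\exp\bigl(-a_h\sum_{j=1}^{i}\xi_{-j}\bigr)$,
and the $j=i$ summand $\xi_{-i}=\xi\leqslant x_{-i}$ is precisely the quantity the induction has not yet reached; the hypothesis $\sum_{j<i}x_{-j}\leqslant A_{i-1}$ does not justify your displayed passage $f_{i,\vartheta}'(\xi)\geqslant g_i(\vartheta)e^{-a_h A_{i-1}}$, since the exponent silently lost the $j=i$ term. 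The correct move is not a global $c_h^{-1}$ adjustment to the final sum but rather to avoid using~\eqref{eq:main-dist-1} wholesale: go back to $f_{i,\vartheta}'(\xi)=g_i(\vartheta)\prod_{j=1}^{i}h'(\xi_{-j})$, bound $h'(\xi_{-j})\geqslant e^{-a_h\xi_{-j}}\geqslant e^{-a_h x_{-j}}$ only for $j<i$ (these are under inductive control), and bound the uncontrolled last factor unconditionally by $h'(\xi_{-i})\geqslant c_h$. This gives
$f_{i,\vartheta}'(\xi)\geqslant c_h\,e^{-a_h A_{i-1}}g_i(\vartheta)\geqslant C_n^{-1}g_i(\vartheta)$,
hence $x_{-i}\leqslant C_n x_0 g_i(\vartheta)^{-1}\leqslant\alpha_i$ with no residual $c_h^{-1}$ slack, and the induction closes exactly. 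You gesture at this by observing that ``the factor $c_h$ is exactly what is gained,'' which is the right diagnosis, but the proof needs the split-off-last-factor step made explicit. That is precisely what the paper does by writing out the product and isolating $h'(x_{-i})$; the constant $C_n=c_h^{-1}e^{a_h A_n}$ is designed for exactly this decomposition.
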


\begin{proof}
The second inequality of (\ref{eq:main-dist-1}) is an immediate consequence of the concavity of the branches. The first one follows from
\begin{align}\label{eq:as-in}
f_{n,\vartheta}'(x)
&=
f_{1,\vartheta}'(x_{-1})\cdot f_{n-1,S\vartheta}'(x_{-n})
=\;\dots\;
=
\prod_{i=0}^{n-1} f_{1,S^{i}\vartheta}'(x_{-i-1})\\
&=
\prod_{i=1}^{n} g(S^{i}\vartheta)\cdot\prod_{i=1}^{n} h'(x_{-i})
\geqslant
f_{n,\vartheta}'(0)\cdot\exp\left(-a_h\sum_{i=1}^n x_{-i}\right)\fs\nn
\end{align}
In order to prove (\ref{eq:main-dist-2}), it suffices to show that
\begin{equation}\label{eq:to-prove-new}
x_{-i}\leqslant C_n\,x_0\,g_i(\vartheta)^{-1}\leqslant\alpha_i\;(i=1,\dots,n)\fs
\end{equation}
As the second inequality is just a reformulation of (\ref{eq:main-dist-ass-1}), it remains to prove the first one.

For $i=1,\dots,n$ we have
\begin{equation*}
x_0=f_{i,\vartheta}(x_{-i})
\geqslant 
x_{-i}\cdot f_{i,\vartheta}'(x_{-i})
\end{equation*}
and, as in (\ref{eq:as-in}),
\begin{equation}
f_{i,\vartheta}'(x_{-i})
=
\prod_{j=1}^{i}g(S^{j}\vartheta)\cdot\prod_{j=1}^{i}h'(x_{-j})
\geqslant
g_i(\vartheta)\,h'(x_{-i})\cdot
\exp\left(-a_h\,\sum_{j=1}^{i-1}x_{-j}\right)\fs
\end{equation}
Hence
\begin{equation}\label{eq:xi-est-1}
x_{-i}\,h'(x_{-i})
\leqslant
x_0\,g_i(\vartheta)^{-1}\cdot
\exp\left(a_h\,\sum_{j=1}^{i-1}x_{-j}\right)\co
\end{equation}
and as $x_0\leqslant c_he^{-a_hA_n}\alpha_i g_i(\vartheta)$ for $i=1,\dots,n$ by assumption (\ref{eq:main-dist-ass-1}), it follows that 
\begin{equation}\label{eq:xi-est-2}
x_{-i}
\leqslant
\alpha_i\cdot\exp\left(-a_h A_n+a_h\sum_{j=1}^{i-1}x_{-j}\right)\fs
\end{equation}
For $i=1$ we see at once that $x_{-1}\leqslant\alpha_1\,e^{-a_hA_n}\leqslant\alpha_1$, and for $i=2,\dots,n$ it follows inductively that
\begin{equation}
x_{-i}
\leqslant
\alpha_i\cdot\exp
\left(-a_hA_n+a_h\sum_{j=1}^{i-1}\alpha_j\right)
\leqslant
\alpha_i\fs
\end{equation}
Combined with (\ref{eq:xi-est-1}) this yields
(\ref{eq:to-prove-new}), namely
\begin{equation}
x_{-i}
\leqslant
x_0\,g_i(\vartheta)^{-1}\,c_h^{-1}\,e^{a_hA_n}
=
C_n\,x_0\,g_i(\vartheta)^{-1}\fs
\end{equation}
\end{proof}

\begin{corollary}\label{coro:for-contradict}
Let $(\alpha_i)_{i\geqslant1}$ be as in the preceding proposition and suppose that $\alpha_i\leqslant1$ for all $i$. Then, for all $n\in\N$ and $\vartheta\in\T$, there exists $i\in\{1,\dots,n\}$ such that
\begin{equation}
\varphi_n(\vartheta)>C_\infty^{-1}\,\alpha_i\,g_i(\vartheta)\fs
\end{equation}
\end{corollary}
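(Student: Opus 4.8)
The plan is to argue by contradiction using Proposition \ref{prop:distortion-1} with $x = a$ (the right endpoint of $I$), so that $x_0 = f_{n,\vartheta}(a) = F^n_{\hS^{-n}\theta}(a) = \hat\varphi_n$, i.e. $x_0 = \varphi_n(\vartheta)$ in the notation of the paper. Since $\alpha_i \leqslant 1$ for all $i$, we have $A_n \leqslant A_\infty$ and hence $C_n \leqslant C_\infty$ for every $n$; thus the hypothesis $x_0 \leqslant C_\infty^{-1}\alpha_i g_i(\vartheta)$ implies $x_0 \leqslant C_n^{-1}\alpha_i g_i(\vartheta)$, which is exactly condition (\ref{eq:main-dist-ass-1}). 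So suppose, contrary to the claim, that $\varphi_n(\vartheta) \leqslant C_\infty^{-1}\alpha_i g_i(\vartheta)$ for all $i = 1,\dots,n$.

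First I would invoke Proposition \ref{prop:distortion-1}, part (\ref{eq:main-dist-2}), whose hypothesis is now verified, to conclude that $\sum_{i=1}^n x_{-i} \leqslant A_n \leqslant A_\infty$. Then I would feed this bound into the lower inequality of (\ref{eq:main-dist-1}): it gives
\begin{equation*}
f_{n,\vartheta}'(a) \geqslant f_{n,\vartheta}'(0)\cdot\exp\Bigl(-a_h\sum_{i=1}^n x_{-i}\Bigr) \geqslant f_{n,\vartheta}'(0)\,e^{-a_h A_\infty}.
\end{equation*}
Now $f_{n,\vartheta}'(0) = g_n(\vartheta)$ by the chain-rule computation in (\ref{eq:as-in}) (using $h'(0) = 1$), so $f_{n,\vartheta}$ contracts the interval $[0,a]$ by a factor at least $c_h^{-1}e^{-a_h A_\infty}\cdot\!$ — more precisely, combining with the concavity bound $f_{n,\vartheta}(a) \geqslant a\, f_{n,\vartheta}'(a)$ one gets $\varphi_n(\vartheta) = f_{n,\vartheta}(a) \geqslant a\,g_n(\vartheta)\,e^{-a_h A_\infty}$. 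The point of this chain is that the contradiction hypothesis both delivers the smallness needed to run (\ref{eq:main-dist-2}) and then is contradicted by the resulting lower bound on $\varphi_n(\vartheta)$; one just has to check the constants line up so that $a\,e^{-a_h A_\infty}\, g_n(\vartheta) > C_\infty^{-1}\alpha_n g_n(\vartheta)$, i.e. that $a\,c_h\,e^{-a_h A_\infty} > \alpha_n \cdot (\text{something} \leqslant 1)$ fails to hold in general — so I expect a slight adjustment is needed here.

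The main obstacle I anticipate is exactly this bookkeeping of constants at the last step: the crude bound $\varphi_n(\vartheta) \geqslant a\,g_n(\vartheta)e^{-a_hA_\infty}$ need not by itself exceed $C_\infty^{-1}\alpha_n g_n(\vartheta) = c_h\,e^{-a_hA_\infty}\alpha_n g_n(\vartheta)$ when $\alpha_n$ is close to $1$ and $a < c_h$. The fix is presumably to be less wasteful: rather than using only the global bound $\sum_i x_{-i}\leqslant A_\infty$, one re-examines which index $i$ actually forces the contradiction — note (\ref{eq:xi-est-1}) gives $x_{-i}h'(x_{-i}) \leqslant x_0\, g_i(\vartheta)^{-1} e^{a_h A_\infty}$, and feeding $x_0 \leqslant C_\infty^{-1}\alpha_i g_i(\vartheta)$ back in makes $x_{-i}$ strictly smaller than $\alpha_i$, and then the reverse estimate $x_0 = f_{i,\vartheta}(x_{-i}) \leqslant g_i(\vartheta)\,x_{-i}$ (again from concavity and $h'\leqslant 1$ so $h(t)\leqslant t$) yields $x_0 \leqslant g_i(\vartheta)\alpha_i e^{-a_hA_\infty}/h'(x_{-i}) $ with room to spare — chasing this more carefully should close the gap with a clean strict inequality for at least one $i$. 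Once the constants are matched, the contradiction is immediate and the corollary follows.
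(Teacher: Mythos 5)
Your approach---argue by contradiction and feed the contradiction hypothesis into Proposition~\ref{prop:distortion-1}---is exactly the paper's. The only issue is the premature weakening you make in the middle: after correctly concluding $\sum_{i=1}^n x_{-i}\leqslant A_n$ from (\ref{eq:main-dist-2}), you replace $A_n$ by $A_\infty$ before plugging into (\ref{eq:main-dist-1}). That is what causes the apparent bookkeeping problem at the end, and it is also why the alternative fix you then sketch (reinspecting (\ref{eq:xi-est-1}) and a reverse concavity estimate) is unnecessary and, as written, not actually closed.

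Keep the tighter bound: from $\sum_{i=1}^n x_{-i}\leqslant A_n$ and (\ref{eq:main-dist-1}) you get $f_{n,\vartheta}'(a)\geqslant g_n(\vartheta)e^{-a_hA_n}$, hence
\begin{equation*}
\varphi_n(\vartheta)=f_{n,\vartheta}(a)\geqslant a\,f_{n,\vartheta}'(a)\geqslant a\,g_n(\vartheta)\,e^{-a_hA_n}\geqslant c_h\,\alpha_n\,g_n(\vartheta)\,e^{-a_hA_n}=C_n^{-1}\,\alpha_n\,g_n(\vartheta)\fs
\end{equation*}
The last inequality uses $c_h\leqslant a$ together with $\alpha_n\leqslant 1$. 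Now the strictness you were worried about comes for free from $C_n<C_\infty$: since all $\alpha_i>0$, we have $A_n<A_\infty$ strictly, so $C_n^{-1}>C_\infty^{-1}$, and therefore $\varphi_n(\vartheta)>C_\infty^{-1}\,\alpha_n\,g_n(\vartheta)$, contradicting the assumed bound for $i=n$. (Also note that the contradiction hypothesis $\varphi_n(\vartheta)\leqslant C_\infty^{-1}\alpha_i g_i(\vartheta)$ implies the hypothesis (\ref{eq:main-dist-ass-1}) of Proposition~\ref{prop:distortion-1} precisely because $C_\infty^{-1}\leqslant C_n^{-1}$; so $C_\infty$ does legitimately appear on the assumption side, just not on the conclusion side.) With this one change your argument is a clean reproduction of the paper's proof.
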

\begin{proof}
Suppose for a contradiction that there are $n\in\N$ and $\vartheta\in\T$ such that
\begin{equation}\label{eq:for-contadict1}
\varphi_n(\vartheta)\leqslant C_\infty^{-1}\,\alpha_i\,g_i(\vartheta)\quad (i=1,\dots,n)\fs
\end{equation}
Now Proposition~\ref{prop:distortion-1} implies 
\begin{equation}
\varphi_n(\vartheta)
=
f_{n,\vartheta}(a)
%\int_0^af_{n,\vartheta}'(x)\,dx
\geqslant
a\,f_{n,\vartheta}'(a)
\geqslant
a\,g_n(\vartheta)\,e^{-a_hA_n}
\geqslant
c_h\,g_n(\vartheta)\,e^{-a_hA_n}\,\alpha_n
=
C_n^{-1}\,g_n(\vartheta)\,\alpha_n
\end{equation}
which contradicts (\ref{eq:for-contadict1}) for $i=n$, because
$C_n<C_\infty$.
\end{proof}

\subsection{Area distortion}

Here are some consequences of the estimates from the previous section for "telescoping" certain small areas in $M=\T\times I$. 
%We use the following notation: For $U\in\mathcal{U}_n(\vartheta)$ and $\epsilon>0$  let $U[\epsilon]=U\times[0,\epsilon]$. 
Recall that $D$ is the distortion constant from Hypothesis~\ref{hypo:2} and Remark~\ref{remark:distortion}.
Denote also by $m^2$ the $2$-dimensional Lebesgue measure on $\T\times I$.
For $n\in \N$ and $U\in\mathcal{U}_n(\vartheta)$ we define the maps
\begin{equation}
f_{n,U}:S^n(U)\times I\to M, \quad(\vartheta,x)\mapsto(({S^n}_{|U})^{-1}(\vartheta),f_{n,\vartheta}(x))\fs
\end{equation}

\begin{proposition}\label{prop:distortion-2}
In the situation of Proposition \ref{prop:distortion-1}, let $(\alpha_i)_{i\geqslant1}$ be a \emph{summable} sequence. Then for all $\vartheta\in\T$, all $n\in\N$, all $U\in\mathcal{U}_n(\vartheta)$,
all
$H>0$ such that
\begin{equation}\label{eq:main-dist-ass}
\left|(S^n)'(\vartheta)\cdot g_i(\vartheta)\right|^{-1}
\leqslant
H^{-1}D^{-1}C_\infty^{-1}\,\alpha_i\text{ for }(i=1,\dots,n)\co
\end{equation}
and for $\tilde{\vartheta}\in U$ and $\tilde{x}\in I$ with
\begin{equation}\label{eq:ass-small-x0-2}
f_{n,\tilde{\vartheta}}(\tilde{x})
\leqslant 
H\cdot |(S^n)'({\vartheta})|^{-1}
\end{equation}
the following holds:
\begin{compactenum}[1.]
\item 
\begin{equation}\label{eq:main-dist-3}
e^{-a_hA_\infty}
\leqslant
\frac{f_{n,\tilde\vartheta}'(\tilde{x})}{f_{n,\tilde\vartheta}'(0)}\leqslant1\fs
\end{equation}
\item
\begin{equation}\label{eq:main-dist-3a}
D^{-1}e^{-a_hA_\infty}
\leqslant
\frac{f_{n,\tilde\vartheta}'(\tilde{x})}{f_{n,\vartheta}'(0)}
\leqslant
D\fs
\end{equation}
\item
For the Jacobian $Jf_{n,U}$,
\begin{equation}\label{eq:main-dist-4}
D^{-2}e^{-a_hA_\infty}
\leqslant
\frac{Jf_{n,U}(S^n\tilde{\vartheta},\tilde{x})}{Jf_{n,U}(S^n\vartheta,0)}
\leqslant
D^2\fs
\end{equation}
\item
For measurable $V,W\subseteq S^n(U)\times I$, 
%such that $f_{n,U}(V),f_{n,U}(W)\subseteq U[\epsilon]$,
\begin{equation}\label{eq:volume-dist}
\left(D^{4} e^{a_hA_\infty}\right)^{-1}
\leqslant
\frac{m^2(V)}{m^2(W)}
\bigg/
\frac{m^2({f_{n,U}}V)}{m^2({f_{n,u}}W)}
\leqslant
D^4e^{a_hA_\infty}\fs
\end{equation}
\end{compactenum}
\end{proposition}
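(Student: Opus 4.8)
The plan is to deduce all four assertions from Proposition~\ref{prop:distortion-1} applied with the base point $\tilde\vartheta$ in place of $\vartheta$ and with $\tilde x$ playing the role of the point in the fibre over $S^n\tilde\vartheta$; the crux is therefore to verify that the smallness hypothesis~(\ref{eq:main-dist-ass-1}) holds for $\tilde\vartheta$. First I would note that $U\in\mathcal{U}_n(\vartheta)$ forces $U\in\mathcal{U}_i(\vartheta)$ for every $i\leqslant n$ (injectivity and non-vanishing derivative of $S^n_{|U}$ propagate to $S^i_{|U}$), so that the distortion bound~(\ref{eq:bounded-osc}) gives $g_i(\vartheta)\leqslant D\,g_i(\tilde\vartheta)$ for $i=1,\dots,n$. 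Rewriting hypothesis~(\ref{eq:main-dist-ass}) as $H\,|(S^n)'(\vartheta)|^{-1}\leqslant D^{-1}C_\infty^{-1}\,\alpha_i\,g_i(\vartheta)$ and combining it with~(\ref{eq:ass-small-x0-2}) I then obtain
\begin{equation}
f_{n,\tilde\vartheta}(\tilde x)
\leqslant
D^{-1}C_\infty^{-1}\,\alpha_i\,g_i(\vartheta)
\leqslant
C_\infty^{-1}\,\alpha_i\,g_i(\tilde\vartheta)
\leqslant
C_n^{-1}\,\alpha_i\,g_i(\tilde\vartheta)
\qquad(i=1,\dots,n)\co
\end{equation}
where the last step uses $C_n\leqslant C_\infty$ (valid since $A_n\leqslant A_\infty$). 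This is precisely~(\ref{eq:main-dist-ass-1}) for $\tilde\vartheta$, so Proposition~\ref{prop:distortion-1} yields both $\exp\bigl(-a_h\sum_{i=1}^n x_{-i}\bigr)\leqslant f_{n,\tilde\vartheta}'(\tilde x)/f_{n,\tilde\vartheta}'(0)\leqslant1$ (now with $x_{-i}$ the backward fibre orbit of $\tilde x$ under the maps over $\tilde\vartheta$) and $\sum_{i=1}^n x_{-i}\leqslant A_n\leqslant A_\infty$ from~(\ref{eq:main-dist-2}); assertion~1, equation~(\ref{eq:main-dist-3}), is immediate.

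For assertion~2 I would use that $f_{n,\vartheta}'(0)=g_n(\vartheta)$ and $f_{n,\tilde\vartheta}'(0)=g_n(\tilde\vartheta)$ (because $h'(0)=1$ and $f_{k,\cdot}(0)=0$, so the $h'$-factors in~(\ref{eq:as-in}) are all $1$ at $0$), whence~(\ref{eq:bounded-osc}) gives $f_{n,\tilde\vartheta}'(0)/f_{n,\vartheta}'(0)\in[D^{-1},D]$; multiplying this ratio into the chain from assertion~1 produces~(\ref{eq:main-dist-3a}). For assertion~3 I observe that $f_{n,U}$ is triangular — its base component $({S^n}_{|U})^{-1}$ ignores the fibre variable — so its Jacobian factorises as $Jf_{n,U}(S^n\eta,y)=|(S^n)'(\eta)|^{-1}\,f_{n,\eta}'(y)$ for $\eta\in U$, giving
\begin{equation}
\frac{Jf_{n,U}(S^n\tilde{\vartheta},\tilde{x})}{Jf_{n,U}(S^n\vartheta,0)}
=
\frac{|(S^n)'(\vartheta)|}{|(S^n)'(\tilde\vartheta)|}\cdot\frac{f_{n,\tilde\vartheta}'(\tilde x)}{f_{n,\vartheta}'(0)}\fs
\end{equation}
The first factor lies in $[D^{-1},D]$ by the derivative-distortion bound of Remark~\ref{remark:distortion}, the second in $[D^{-1}e^{-a_hA_\infty},D]$ by assertion~2, so the product lies in $[D^{-2}e^{-a_hA_\infty},D^2]$, which is~(\ref{eq:main-dist-4}).

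Assertion~4 follows by the change-of-variables formula. Here $V,W$ are to be read as measurable subsets of the region where~(\ref{eq:ass-small-x0-2}) holds (the region to which the hypotheses apply; note $(S^n\vartheta,0)$ belongs to it since $f_{n,\vartheta}(0)=0$). On that region assertion~3, applied to the two ratios in the decomposition $Jf_{n,U}(p)/Jf_{n,U}(q)=\bigl(Jf_{n,U}(p)/Jf_{n,U}(S^n\vartheta,0)\bigr)\cdot\bigl(Jf_{n,U}(S^n\vartheta,0)/Jf_{n,U}(q)\bigr)$, shows that $Jf_{n,U}$ oscillates within a multiplicative factor $D^4e^{a_hA_\infty}$. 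Since $m^2({f_{n,U}}V)=\int_V Jf_{n,U}\,dm^2$ and likewise for $W$, sandwiching each integral between $(\inf Jf_{n,U})\,m^2(\cdot)$ and $(\sup Jf_{n,U})\,m^2(\cdot)$ and rearranging gives~(\ref{eq:volume-dist}).

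I expect the only real obstacle to be the bookkeeping in the first step: one must keep straight which estimate involves the centre $\vartheta$ and which the variable point $\tilde\vartheta$, apply the distortion bound in the correct direction ($g_i(\vartheta)\leqslant D\,g_i(\tilde\vartheta)$, not the reverse), and notice the harmless passage from $C_n$ to the uniform constant $C_\infty$. Everything afterwards is a routine assembly of Proposition~\ref{prop:distortion-1}, Hypothesis~\ref{hypo:2} and Remark~\ref{remark:distortion} with the change-of-variables formula.
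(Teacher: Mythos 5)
Your proof is correct and follows essentially the same route as the paper's: verify hypothesis~(\ref{eq:main-dist-ass-1}) of Proposition~\ref{prop:distortion-1} at the base point $\tilde\vartheta$ via the chain $f_{n,\tilde\vartheta}(\tilde x)\leqslant H|(S^n)'(\vartheta)|^{-1}\leqslant D^{-1}C_\infty^{-1}\alpha_ig_i(\vartheta)\leqslant C_\infty^{-1}\alpha_ig_i(\tilde\vartheta)$, then propagate through the factorisations $f_{n,\cdot}'(0)=g_n(\cdot)$ and $Jf_{n,U}=|(S^n)'|^{-1}f_{n,\cdot}'$ using Hypothesis~\ref{hypo:2} and Remark~\ref{remark:distortion}. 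Your explicit remark that $V,W$ in assertion~4 must lie in the region where~(\ref{eq:ass-small-x0-2}) holds is a useful clarification of a point the paper leaves implicit, but not a material deviation.
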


\begin{proof}
1.\; This follows from Proposition \ref{prop:distortion-1} once we have checked that $f_{n,\tilde{\vartheta}}(\tilde{x})\leqslant C_\infty^{-1} \alpha_i g_i(\tilde{\vartheta})$ for $i=1,\dots,n$: By (\ref{eq:ass-small-x0-2}), (\ref{eq:main-dist-ass})  and 
Hypothesis~\ref{hypo:2},
\begin{equation}\label{eq:dist-intermed-1}
f_{n,\tilde{\vartheta}}(\tilde{x})
\leqslant
H\cdot |(S^n)'({\vartheta})|^{-1}
\leqslant
D^{-1} g_i(\vartheta) C_\infty^{-1}\alpha_i
\leqslant
g_i(\tilde\vartheta) C_\infty^{-1}\alpha_i\fs
\end{equation}
2.\; As 
\begin{displaymath}
\frac{f_{n,\tilde\vartheta}'(\tilde{x})}{f_{n,\vartheta}'(0)}
=
\frac{f_{n,\tilde\vartheta}'(\tilde{x})}{f_{n,\tilde\vartheta}'(0)}
\cdot
\frac{g_n(\tilde{\vartheta})}{g_n(\vartheta)}\co
\end{displaymath}
this follows at once from Hypothesis~\ref{hypo:2} and (\ref{eq:main-dist-3}).
\\[2mm]
3.\; 
Due to the skew product structure of $f_{n,U}$, its Jacobian is
\begin{equation}
Jf_{n,U}(S^n\vartheta,x)=|(S^n)'(\vartheta)|^{-1}\,f_{n,\vartheta}'(x)\fs
\end{equation}
Hence
\begin{align*}
\frac{Jf_{n,U}(S^n\tilde{\vartheta},\tilde{x})}{Jf_{n,U}(S^n \vartheta,0)}
=
\frac{|(S^n)'({\vartheta})|}{|(S^n)'(\tilde\vartheta)|}
\cdot
\frac{f_{n,\tilde{\vartheta}}'(\tilde{x})}{f_{n,{\vartheta}}'(0)}\co
\end{align*}
and (\ref{eq:main-dist-4}) follows at once from Remark~\ref{remark:distortion} and (\ref{eq:main-dist-3a}).
\\[2mm]
4.\; This is an immediate consequence of (\ref{eq:main-dist-4}).
\end{proof}

\section{The distribution of $\ppl$: Proofs}
\label{sec:proof-global-dist}

\subsection{Proof of Theorem \ref{theo:tail}}

The proof of Theorem \ref{theo:tail} is inspired by proofs of a related result in queuing theory, namely the determination of Loyne's exponent \cite{Loynes1962} for the stationary distribution of Lindley's recursion
\cite{Lindley1952}, see also \cite{Glynn1994} and in particular \cite[Lemmas 4 and 5]{Lelarge2008}.

Recall the weighted Perron-Frobenius operators $\LL_s$ defined in (\ref{eq:PF}) and the notation $\psi(s)=\log\rho(\LL_s)$. We noticed already that $\psi(0)=0$, $\psi'(0)<0$, and that there is a unique ${s_*}>0$ such that $\psi({s_*})=0$ and $\psi'({s_*})>0$.
%$\psi(t)<0$ for all $t\in(0,{s_*})$. 
For technical reasons we prove a slightly stronger statement than Theorem~\ref{theo:tail}, namely: For each family $(J_x)_{x>0}$ of subintervals of $\T$ with $\inf_{x>0}|J_x|>0$ we have
\begin{equation}\label{eq:tail-scaling-J}
\lim_{x\to\infty}\frac{1}{x}\log m\{\vartheta\in {J_x}: \log\ppl(\vartheta)<-x\}=-{s_*}\fs
\end{equation}

Fix any $s\in(0,{s_*})$ and choose $\delta>0$ such that $\rho(\LL_s)e^{3s\delta}<1$.
There is a constant $C>0$ that depends on $s$ and $\delta$ such that 
\begin{equation}
\|\LL_s^n1\|_1\leqslant C\left(\rho(\LL_s)e^{s\delta}\right)^n\leqslant C e^{-2ns\delta}\text{\quad for all }n\geqslant 1\fs
\end{equation}
For $\kappa>0$ denote
\begin{equation}
A_\kappa
=
\left\{\vartheta\in \T: \exists n\geqslant1\text{ such that }
g_n(\vartheta)\leqslant\kappa e^{n\delta}\right\}\fs
\end{equation}
\begin{lemma}\label{lemma:A_kappa}
There is a constant $C>0$ that depends on $t$ and $\delta$ such that for all $\kappa>0$
\begin{equation}\label{eq:Cramer}
m(A_\kappa)
\leqslant C\cdot\kappa^s\fs
\end{equation}
\end{lemma}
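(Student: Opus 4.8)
The plan is to estimate $m(A_\kappa)$ by a union bound over $n$, bounding the measure of the set where $g_n(\vartheta)\leqslant\kappa e^{n\delta}$ for a \emph{fixed} $n$ by applying the operator $\LL_s$ iteratively. First I would write, for fixed $n\geqslant1$,
\begin{equation*}
m\{\vartheta\in\T: g_n(\vartheta)\leqslant\kappa e^{n\delta}\}
=
\int_\T \mathbf{1}\{g_n(\vartheta)\leqslant\kappa e^{n\delta}\}\,dm(\vartheta)\fs
\end{equation*}
The key observation is that on this set $g_n(\vartheta)^{-s}\geqslant\kappa^{-s}e^{-ns\delta}$, so the indicator is dominated by $\kappa^s e^{ns\delta}g_n(\vartheta)^{-s}$. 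Since $g_n(\vartheta)=\prod_{i=1}^n g(S^i\vartheta)$, the quantity $g_n(\vartheta)^{-s}$ is exactly the product of the weights $e^{-s\log g(S^i\vartheta)}$ appearing in the definition of $\LL_s$, pulled along the orbit. A standard change of variables for the transfer operator gives
\begin{equation*}
\int_\T g_n(\vartheta)^{-s}\,dm(\vartheta)
=
\int_\T \LL_s^n\mathbf{1}(\vartheta)\,dm(\vartheta)
=
\|\LL_s^n\mathbf{1}\|_1\fs
\end{equation*}
(One has to be a touch careful that the weight in $\LL_s$ involves $g$ evaluated at preimages and is aggregated correctly under iteration; this is the routine iterated-transfer-operator identity, using that $\LL_0$ is the genuine Perron--Frobenius operator so $\int \LL_s^n f\,dm = \int f\cdot g_n^{-s}\,dm$ up to the bounded-distortion bookkeeping of Hypothesis~\ref{hypo:2}.)

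Next I would combine these: for each $n$,
\begin{equation*}
m\{\vartheta: g_n(\vartheta)\leqslant\kappa e^{n\delta}\}
\leqslant
\kappa^s e^{ns\delta}\,\|\LL_s^n\mathbf{1}\|_1
\leqslant
\kappa^s e^{ns\delta}\cdot C e^{-2ns\delta}
=
C\kappa^s e^{-ns\delta}\fs
\end{equation*}
Then, since $A_\kappa=\bigcup_{n\geqslant1}\{\vartheta: g_n(\vartheta)\leqslant\kappa e^{n\delta}\}$, a union bound yields
\begin{equation*}
m(A_\kappa)
\leqslant
\sum_{n\geqslant1} C\kappa^s e^{-ns\delta}
=
C\kappa^s\cdot\frac{e^{-s\delta}}{1-e^{-s\delta}}
=:
C'\kappa^s\fs
\end{equation*}
Renaming $C'$ as $C$ gives the claim, with $C$ depending only on $s$ (written $t$ in the lemma statement) and $\delta$.

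The main obstacle — really the only nontrivial point — is justifying the identity $\int_\T g_n(\vartheta)^{-s}\,dm = \|\LL_s^n\mathbf{1}\|_1$, i.e.\ that iterating $\LL_s$ against the constant function $\mathbf{1}$ and integrating against $m$ reproduces exactly the Birkhoff product $g_n^{-s}$. This requires unwinding the definition $\LL_s f(\vartheta)=\sum_{\tilde\vartheta\in S^{-1}\vartheta} \frac{f(\tilde\vartheta)}{|S'(\tilde\vartheta)|}e^{-s\log g(\tilde\vartheta)}$, using the duality $\int (\LL_s f)\cdot w\,dm = \int f\cdot (w\circ S)\,e^{-s\log g}\,dm$ for the Perron--Frobenius part, and matching the index shift so that $g_n=\prod_{i=1}^n g\circ S^i$ comes out rather than $\prod_{i=0}^{n-1}g\circ S^i$; this is a purely cosmetic reindexing and at worst changes $C$ by the bounded factor $\|g\|_\infty^{\pm s}$. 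Everything else is the spectral bound $\|\LL_s^n\mathbf{1}\|_1\leqslant C(\rho(\LL_s)e^{s\delta})^n$, already granted before the lemma, plus a geometric series.
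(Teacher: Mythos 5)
Your argument is correct and follows the paper's own proof almost verbatim: a Markov/Chebyshev (Cram\'er) bound for each fixed $n$, the transfer-operator identity relating $\int_\T g_n^{-s}\,dm$ to $\int_\T\LL_s^n 1\,dm$, the already-granted spectral bound $\|\LL_s^n 1\|_1\leqslant Ce^{-2ns\delta}$, and a geometric-series union bound over $n$. Your remark that the index mismatch between $g_n=\prod_{i=1}^n g\circ S^i$ and the Birkhoff product $\prod_{i=0}^{n-1}g\circ S^i$ appearing in $\LL_s^n 1$ only shifts the constant is accurate and, if anything, slightly more careful than the paper, which writes $\LL_0^n(e^{-s\log g_n})=\LL_s^n(1)$ without comment even though the two integrals agree only up to the bounded factor coming from $g(\vartheta)/g(\tilde\vartheta)$.
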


\begin{proof}
As $s>0$, we have the usual Cram\'er type estimate for each $n\geqslant1$:
\begin{align}
m\left\{\vartheta\in \T:
g_n(\vartheta)\leqslant\kappa e^{n\delta}\right\}
&=
m\left\{\vartheta\in \T:
\kappa^s e^{ns\delta}
\,
e^{-s\log g_n(\vartheta)}\geqslant1
\right\}
\nn\\
&\leqslant
\kappa^s e^{ns\delta}
\int_\T e^{-s\log g_n}\,dm\nn\\
&=
\kappa^s e^{ns\delta}
\int_\T\LL_0^n(e^{-s\log g_n})\,dm
=
\kappa^s e^{ns\delta}
\int_\T\LL_s^n(1)\,dm\nn\\
&\leqslant
C e^{-ns\delta}\cdot\kappa^s\fs
\end{align}
Summing this inequality over all $n=1,2,\dots$, we get 
(\ref{eq:Cramer}) with the constant $C/(e^{s\delta}-1)$, which depends again only on $\delta$ and $s$.
\end{proof}

We start the proof of (\ref{eq:tail-scaling-J}) with the upper estimate.
Let $\alpha_i=e^{-i\delta}$ $(i=1,2,\dots)$ so that $A_\infty=\sum_{i=1}^\infty\alpha_i=\frac{1}{e^\delta-1}$ and $C_\infty=c_h^{-1}e^{-a_hA_\infty}$ depend only on $\delta$.
Let $\vartheta\in\T\setminus A_\kappa$. Then $g_i(\vartheta)\alpha_i>\kappa$ for all $i\geqslant1$. Therefore, by Corollary~\ref{coro:for-contradict}, for all $n\in\N$ there exists $i\in\{1,\dots,n\}$ such that
\begin{equation}
\varphi_n(\vartheta)>C_\infty^{-1}\,\alpha_i\,g_i(\vartheta)
>C_\infty^{-1}\,\kappa
\end{equation}
and hence
\begin{equation}
\ppl(\vartheta)=\inf_{n\geqslant1}\varphi_n(\vartheta)
\geqslant
C_\infty^{-1}\,\kappa\fs
\end{equation}

Now fix $x>0$ and let $\kappa=e^{-x}C_\infty$. Then
$\ppl(\vartheta)\geqslant e^{-x}$ for $\vartheta\in\T\setminus A_\kappa$ so that, in view of Lemma~\ref{lemma:A_kappa},
\begin{equation}\label{eq:LD-upper-est}
\limsup_{x\to\infty}\frac{1}{x}\log 
m\{\vartheta\in {J_x}: \log\ppl(\vartheta)<-x\}
\leqslant
\limsup_{x\to\infty}\frac{1}{x}\log m(A_{e^{-x}C_\infty})=
-s\fs
\end{equation}
As this estimate applies to each $s\in(0,{s_*})$, this proves the upper estimate in (\ref{eq:tail-scaling-J}).

We turn to the lower estimate. As $\ppl(\vartheta)\leqslant\varphi_n(\vartheta)=f_{n,\vartheta}(a)
\leqslant a\,g_n(\vartheta)$ for all $n$ and all $\vartheta\in\T$, we have immediately that
\begin{equation}
m\{\vartheta\in {J_x}: \log\ppl(\vartheta)<-x\}
\geqslant
m\{\vartheta\in {J_x}: \log g_n(\vartheta)<-x-\log a\}
\end{equation}
for all $n\geqslant1$. 
Let $\alpha:=\psi'({s_*})>0$. Then,
for $n=\lceil\alpha^{-1}(x+\log a)\rceil$,
\begin{align}\label{eq:lower-LD}
\liminf_{x\to\infty}\frac{1}{x}\log m\{\vartheta\in {J_x}: \log\ppl(\vartheta)<-x\}
&\geqslant
\liminf_{n\to\infty}\frac{1}{\alpha n}\log\frac 
{m\{\vartheta\in {J_x}: -\log g_n(\vartheta)>n\alpha\}}{m(J_x)}\nn\\
&=
\frac{1}{\alpha}(\psi({s_*})-{s_*}\alpha)
=
\frac{\psi({s_*})}{\psi'({s_*})}-{s_*}
=-{s_*}\fs
\end{align}
This is a consequence of large deviations theory for the map $S$, details of which are provided in the appendix. Together with the upper estimate (\ref{eq:LD-upper-est}), it finishes the proof of Theorem \ref{theo:tail}.

%Fix $\tilde t<{s_*}$ such that $\alpha:=\psi'(\tilde t)>0$. (This is possible because $\psi'({s_*})>0$.) Then,
%for $n=\lceil\alpha^{-1}(x+\log a)\rceil$,
%\begin{align}
%\liminf_{x\to\infty}\frac{1}{x}\log m\{\vartheta\in {J_x}: \log\ppl(\vartheta)<-x\}
%&\geqslant
%\liminf_{n\to\infty}\frac{1}{\alpha n}\log\frac 
%{m\{\vartheta\in {J_x}: -\log g_n(\vartheta)>n\alpha\}}{m(J_x)}\nn\\
%&=
%\frac{1}{\alpha}(\psi(\tilde{t})-\tilde{t}\alpha)
%=
%\frac{\psi(\tilde{t})}{\psi'(\tilde{t})}-\tilde{t}.
%\end{align}
%This is a consequence of large deviations theory for the map $S$, details of which are provided in the appendix. In the limit
%$\tilde{t}\nearrow {s_*}$ this yields
%\begin{equation}\label{eq:lower-LD}
%\liminf_{x\to\infty}\frac{1}{x}\log m\{\vartheta\in J_x: \log\ppl(\vartheta)<-x\}
%\geqslant
%\frac{\psi({s_*})}{\psi'({s_*})}-{s_*}
%=
%-{s_*}
%\end{equation}
%as $\psi({s_*})=0$ and $\psi'({s_*})>0$. This finishes the proof of Theorem \ref{theo:tail}.

\subsection{Proof of Theorem \ref{theo:global-scaling}}

Again we prove a "localized" version of this theorem: instead of the quantity $\Xi_\epsilon=\frac{1}{\epsilon}\int_\T\min\{\ppl(t),\epsilon\}\,dt$ we look at 
\begin{equation}
\Xi_\epsilon:=\frac{1}{\epsilon}\int_{J_\epsilon}\min\{\ppl(t),\epsilon\}\,dt
\end{equation}
for a family of intervals $J_\epsilon$ with $\inf_\epsilon|J_\epsilon|>0$.

We only have to show that
\begin{equation}\label{eq:t*_loc}
\lim_{\epsilon\to0}\frac{\log(1- \Xi_\epsilon)}{\log\epsilon}={s_*}>0\co
\end{equation}
because this implies at once that 
$\lim_{\epsilon\to0}\frac{\log \Xi_\epsilon}{\log\epsilon}=0$.
Recall that
\begin{equation}
1-\Xi_\epsilon=\frac{1}{\epsilon}
\int_{J_\epsilon}(\epsilon-\ppl(\vartheta))^+\,d\vartheta
\leqslant
m\{\vartheta\in J_\epsilon: \ppl(\vartheta)\leqslant\epsilon\}\fs
\end{equation}
Therefore we conclude from  (\ref{eq:tail-scaling-J}) that
\begin{equation}
\limsup_{\epsilon\to0}\frac{\log(1-\Xi_\epsilon)}{\log\epsilon}
\leqslant
\limsup_{\epsilon\to0}\frac{1}{\log\epsilon}
\log m\{\vartheta\in J_\epsilon: \ppl(\vartheta)\leqslant\epsilon\}
=
{s_*}\fs
\end{equation}
For the lower estimate observe that
\begin{equation}
1-\Xi_\epsilon=\frac{1}{\epsilon}
\int_{J_\epsilon}(\epsilon-\ppl(\vartheta))^+\,d\vartheta
\geqslant
\frac{1}{2}
m\left\{\vartheta\in J_\epsilon: \ppl(\vartheta)\leqslant{\epsilon}/{2}\right\}\fs
\end{equation}
This implies, by (\ref{eq:tail-scaling-J}) again,
\begin{equation}
\liminf_{\epsilon\to0}\frac{\log(1-\Xi_\epsilon)}{\log\epsilon}
\geqslant
\liminf_{\epsilon\to0}\frac{1}{\log(\epsilon/2)}
\log m\{\vartheta\in J_\epsilon: \ppl(\vartheta)\leqslant\epsilon/2\}
=
{s_*}\fs
\end{equation}

\section{The stability index: Proof of Theorem \ref{theo:main}}
\label{sec:proofs-local-scaling}

Let $U_{\epsilon_k}$ be a symmetric open interval neighbourhood of $\vartheta$ in $\mathcal{U}_{n_k}(\vartheta)$ satisfying the regularity assumption (\ref{eq:extra-ass}).
As 
$1\geqslant\int_{U_{\epsilon_k}}|(S^{n_k})'|\,dm=|S^{n_k}U_{\epsilon_k}|\geqslant\Delta$, it follows from Remark~\ref{remark:distortion} that
\begin{equation}\label{eq:S-eps}
\frac{\Delta}{2D}\leqslant\epsilon_k|(S^{n_k})'(\vartheta)|\leqslant\frac{D}{2}=H\fs
\end{equation}
Combining this with (\ref{eq:Lambda-Gamma-ass}) and (\ref{eq:extra-ass}) we obtain
\begin{equation}
\lim_{k\to\infty}\frac{\log\epsilon_{k+1}}{\log\epsilon_{k}}
=
\lim_{k\to\infty}\frac{\log|(S^{n_{k+1}})'(v)|}{\log|(S^{n_{k}})'(v)|}
=
\lim_{k\to\infty}\frac{n_{k+1}}{n_k}
=
1 \ .
\end{equation}
For each $\epsilon\in[\epsilon_{k+1},\epsilon_k]$ we have
\begin{equation}
\frac{\log\epsilon_{k+1}}{\log\epsilon_{k}}\frac{\log\Sigma_{\epsilon_{k+1}}(\vartheta)}{\log\epsilon_{k+1}}
\leqslant
\frac{\log\Sigma_\epsilon(\vartheta)}{\log\epsilon}
\leqslant
\frac{\log\epsilon_k}{\log\epsilon_{k+1}}\frac{\log\Sigma_{\epsilon_k}(\vartheta)}{\log\epsilon_k} \ ,
\end{equation}
and the same holds when $\Sigma_\epsilon(\vartheta)$ is replaced by $(1-\Sigma_\epsilon(\vartheta))$. Therefore it suffices to evaluate the limits for $\sigma_\pm(\vartheta)$ in (\ref{eq:local-scaling}) only along the sequence $(\epsilon_k)_{k\in\N}$.

\noindent 1.\;\emph{The case $\Gamma(\vartheta)+\Lambda(\vartheta)>0$:}
We check the assumptions of Proposition \ref{prop:distortion-2}: Let
\begin{equation}
\delta=\frac{1}{4}\min\left\{\Lambda(\vartheta),\Gamma(\vartheta)+\Lambda(\vartheta)\right\}
\end{equation}
and observe that $\delta>0$.
As $\vartheta$ is regular, there is a constant $C_\vartheta>0$
such that $g_k(\vartheta)>C_\vartheta
\,e^{k(\Gamma(\vartheta)-\delta)}$ and 
$|(S^k)'(\vartheta)|>C_\vartheta\,e^{k(\Lambda(\vartheta)-\delta)}$ for all $k\in\N$.
Set $\alpha_i=e^{-i\delta}$.
Then
\begin{align}
|(S^n)'(\vartheta)\cdot g_i(\vartheta)|^{-1}
&\leqslant
C_\vartheta^{-2}e^{-n(\Lambda(\vartheta)-\delta)-i(\Gamma(\vartheta)-\delta)}\nn\\
&\leqslant
C_\vartheta^{-2}e^{-(n-i)3\delta-i2\delta}
=
C_\vartheta^{-2}e^{-n\delta}\alpha_i e^{-2(n-i)\delta}\nn\\
&\leqslant
C_\vartheta^{-2}e^{-n\delta}\alpha_i
\end{align}
for all $n\in\N$ and all $i=1,\dots,n$.

Now fix the constant  $H$ from Proposition \ref{prop:distortion-2} as $H=\frac{D}{2}$, where $D$ is the basic distortion constant from
Hypothesis~\ref{hypo:2} and 
Remark~\ref{remark:distortion}.
Then, for all sufficiently large $n$, assumption (\ref{eq:main-dist-ass}) is satisfied
for all $i=1,\dots,n$. In particular,
\begin{equation}\label{eq:main-proof-1}
|(S^n)'(\vartheta)|^{-1}
\leqslant C_\vartheta^{-2}e^{-2n\delta}g_n(\vartheta)\fs
\end{equation}

Assume for a contradiction that
$f_{n_k,\tilde{\vartheta}}(a)\leqslant\epsilon_k$.
Then $f_{n_k,\tilde{\vartheta}}(a)\leqslant H|(S^{n_k})'(\vartheta)|^{-1}$ 
by (\ref{eq:S-eps}), so that also
(\ref{eq:ass-small-x0-2}) is satisfied, and (\ref{eq:main-dist-3a}) of Proposition~\ref{prop:distortion-2} yields
\begin{equation}
f_{n_k,\tilde{\vartheta}}(a)\geqslant a\,f'_{n_k,\tilde{\vartheta}}(a)
\geqslant 
aD^{-1}e^{-a_hA_\infty}g_{n_k}(v)
\geqslant
aD^{-1}e^{-a_hA_\infty}C_v^2e^{2n_k\delta}|(S^{n_k})'(\vartheta)|^{-1}
\end{equation}
which contradicts $f_{n_k,\tilde{\vartheta}}(a)\leqslant H|(S^{n_k})'(\vartheta)|^{-1}$
when $n_k$ is sufficiently large, say $n_k\geqslant N_0(v)$.

Therefore, $f_{n_k,\tilde{\vartheta}}(a)>\epsilon_k$ for all $n_k\geqslant N_0(\vartheta)$ and all $\tilde{\vartheta}\in U_{\epsilon_k}$, and there are functions $\delta_{n_k}:S^{n_k}U_{\epsilon_k}\to I$ such that 
$f_{n_k,\tilde{\vartheta}}(\delta_{n_k}(t))
={\epsilon_k}\leqslant H\,|(S^{n_k})'(\vartheta)|^{-1}$ for all $t\in S^{n_k}U_{\epsilon_k}$. 
As 
$\frac{\epsilon_k}{\delta_{n_k}(t)}=\frac{f_{n_k,\tilde{\vartheta}}(\delta_{n_k}(t))}{\delta_{n_k}(t)}
=f_{n_k,\tilde{\vartheta}}'(\tilde{x})$ for some $\tilde{x}=\tilde{x}(t)$,
we conclude from (\ref{eq:main-dist-3a})  that
\begin{equation}\label{eq:dist-intermed-2}
D^{-1}e^{-a_hA_\infty}
\leqslant
\frac{\epsilon_k}{\delta_{n_k}(t)g_{n_k}(\vartheta)}
\leqslant
D\quad\text{for all }t\in S^{n_k}U_{\epsilon_k}\fs
\end{equation}

In view of (\ref{eq:volume-dist}) we have
\begin{equation}
\left(D^4e^{a_hA_\infty}\right)^{-1}
\leqslant
\frac{\int_{S^{n_k}U_{\epsilon_k}}(\delta_{n_k}(t)-\ppl(t))^+\,dt}{\int_{S^{n_k}U_{\epsilon_k}}\delta_{n_k}(t)\,dt}
\bigg/
\frac{\int_{U_{\epsilon_k}}(\epsilon_k-\ppl(t))^+\,dt}{2\epsilon_k^2}
%\Sigma_{\epsilon_n}(\vartheta)
\leqslant
D^4e^{a_hA_\infty}\fs
\end{equation}
As the second quotient is just $1-\Sigma_{\epsilon_k}(\vartheta)$, this implies
\begin{equation}\label{eq:sigmaplus1}
{\sigma}_+(\vartheta)
=
\lim_{k\to\infty}\frac{\log(1-\Sigma_{\epsilon_k}(\vartheta))}{\log\epsilon_k}
=
\lim_{k\to\infty}\frac1{\log\epsilon_k}\cdot\log
\frac{\int_{S^{n_k}U_{\epsilon_k}}(\delta_{n_k}(t)-\ppl(t))^+\,dt}{\int_{S^{n_k}U_{\epsilon_k}}\delta_{n_k}(t)\,dt}
\end{equation}
provided the last limit exists.
Now let
\begin{displaymath}
\underline{\kappa}_{k}=D^{-1}\frac{\epsilon_k}{g_{n_k}(\vartheta)},\quad
\overline{\kappa}_{k}=De^{a_hA_\infty}\frac{\epsilon_k}{g_{n_k}(\vartheta)}
\end{displaymath} 
and observe that
\begin{equation}
\underline{\kappa}_k\leqslant\delta_{n_k}(t)\leqslant\overline{\kappa}_k
\quad\text{for all }t\in {S^{n_k}U_{\epsilon_k}}
\end{equation}
in view of (\ref{eq:dist-intermed-2}).
Therefore,
\begin{align}\label{eq:upper-lower-est}
D^{-2}e^{-a_hA_\infty}\,(1-\Xi_{\underline{\kappa}_{k}})
&=
\frac{\underline{\kappa}_{k}}{\overline\kappa_{k}}\,
(1-\Xi_{\underline\kappa_{k}})
\nonumber\\
&=
\frac{1}{\overline\kappa_{k}}
\int_{S^{n_k}U_{\epsilon_k}}\left(\underline\kappa_{k}-\ppl(t)\right)^+\,dt
\nonumber\\
&\leqslant
\frac{\int_{S^{n_k}U_{\epsilon_k}}(\delta_{n_k}(t)-\ppl(t))^+\,dt}{\int_{S^{n_k}U_{\epsilon_k}}\delta_{n_k}(t)\,dt}\\
&\leqslant
\Delta^{-1}
\frac{1}{\underline{\kappa}_{k}}
\int_{S^{n_k}U_{\epsilon_k}}\left(\overline{\kappa}_{k} -\ppl(t)\right)^+\,dt
\nonumber\\
&=
\Delta^{-1}
\frac{\overline{\kappa}_{k}}{\underline{\kappa}_{k}}\,(1-\Xi_{\overline{\kappa}_{k}})
\nonumber\\
&=
\Delta^{-1}D^2e^{a_hA_\infty}\,(1-\Xi_{\overline\kappa_{k}})\fs\nonumber
\end{align}
As, in view of (\ref{eq:Lambda-Gamma-ass}) and (\ref{eq:S-eps}),
\begin{equation}
\lim_{k\to\infty}\frac{\log\underline{\kappa}_k}{\log\epsilon_k}
=
\lim_{k\to\infty}\frac{\log\overline{\kappa}_k}{\log\epsilon_k}
=
1-\lim_{k\to\infty}\frac{\log g_{n_k}(v)}{\log\epsilon_k}
=
1+\frac{\Gamma(v)}{\Lambda(v)}
>
0\co
\end{equation}
and, observing (\ref{eq:t*_loc}),
\begin{equation}
\lim_{k\to\infty}\frac{\log(1-\Xi_{\underline{\kappa}_k})}{\log\underline{\kappa}_k}
=
{s_*}
=
\lim_{k\to\infty}\frac{\log(1-\Xi_{\overline{\kappa}_k})}{\log\overline{\kappa}_k}\co
\end{equation}
we conclude from (\ref{eq:sigmaplus1}) and (\ref{eq:upper-lower-est}) that
\begin{equation}
{\sigma}_+(\vartheta)
=
\lim_{k\to\infty}\left(
\frac{\log(1-\Xi_{\underline\kappa_{k}})}{\log\underline\kappa_{k}}\cdot\frac{\log\underline\kappa_{k}}{\log\epsilon_k}
\right)
=
{s_*}\cdot\frac{\Lambda(\vartheta)+\Gamma(\vartheta)}{\Lambda(\vartheta)}>0\fs\nonumber
\end{equation}
In particular, $\sigma_-(\vartheta)=0$.
\\[2mm]
2.\; \emph{The case $\Gamma(\vartheta)+\Lambda(\vartheta)<0$:}
In this case,
\begin{equation}
\lim_{k\to\infty}\frac{1}{n_k}\log \frac{g_{n_k}({\vartheta})}{\epsilon_{n_k}}
=
\Gamma(\vartheta)+
\Lambda(\vartheta)
<0\fs
\end{equation}
As the branches $f_{n_k,\tilde{\vartheta}}$ are concave, it follows at once 
that
\begin{equation}\label{eq:ppl-small}
\ppl(\tilde{\vartheta})
\leqslant\varphi_{n_k}(\tilde{\vartheta})= f_{n_k,\tilde{\vartheta}}(a)
\leqslant a g_{n_k}(\tilde{\vartheta})
<\epsilon_{k}e^{{n_k}(\Gamma(\vartheta)+\Lambda(\vartheta))/2}
\end{equation}  
uniformly for $\tilde{\vartheta}\in U_{\epsilon_k}$
when $n_k$ is sufficiently large.
This implies immediately that $\sigma_+(\vartheta)=0$.

In order to estimate of $\sigma_-(\vartheta)$ we will apply Proposition \ref{prop:distortion-1} directly. To this end we show that
\begin{equation}\label{eq:summable-1}
\sum_{i=1}^nf_{n-i,S^i\tilde{\vartheta}}(a)=o(n)\fs
\end{equation}  
Observe first that
\begin{equation}
f_{n-i,S^i\tilde{\vartheta}}(a)\leqslant a\,g_{n-i}(S^i\tilde\vartheta)
=
a\frac{g_n(\tilde{\vartheta})}{g_i(\tilde{\vartheta})}
\leqslant
a D^2\frac{g_n(\vartheta)}{g_i(\vartheta)}\fs
\end{equation}
Let 
\begin{equation}
\Delta(n):=\sup\left\{
\frac{1}{i}|\log g_i-i\Gamma(\vartheta)|: i\geqslant n\right\}\fs
\end{equation}
Then $\Delta(n)\to0$ as $n\to\infty$. Fix a monotone sequence $(j_n)_n$ of integers with $j_n\to\infty$ and $j_n/n\to0$, and define a second sequence $(\ell_n)_n$ as $\ell_n=\lfloor n\sqrt{\Delta(j_n)}\rfloor$. Then
\begin{align}
\sum_{i=1}^nf_{n-i,S^i\tilde{\vartheta}}(a)
&=
\sum_{i=1}^{j_n}f_{n-i,S^i\tilde{\vartheta}}(a)+
\sum_{i=j_n+1}^{n-\ell_n}f_{n-i,S^i\tilde{\vartheta}}(a)+
\sum_{i=n-\ell_n+1}^nf_{n-i,S^i\tilde{\vartheta}}(a)\nonumber\\
&\leqslant
(j_n+\ell_n)a+aD^2g_n(\vartheta)
\sum_{i=j_n+1}^{n-\ell_n}g_i(\vartheta)^{-1}\nonumber\\
&\leqslant
(j_n+\ell_n)a+aD^2e^{n(\Gamma(\vartheta)+\Delta(j_n))}
\sum_{i=j_n+1}^{n-\ell_n}e^{i(-\Gamma(\vartheta)+\Delta(j_n))}\nonumber\\
&\leqslant
(j_n+\ell_n)a+aD^2\frac{e^{-\Gamma(\vartheta)+\Delta(j_n)}}{e^{-\Gamma(\vartheta)+\Delta(j_n)}-1}e^{n(\Gamma(\vartheta)+\Delta(j_n))+(n-\ell_n)(-\Gamma(\vartheta)+\Delta(j_n))}\nonumber\\
&=
o(n)+O(e^{2n\Delta(j_n)+\ell_n(\Gamma(\vartheta)-\Delta(j_n))})\fs
\end{align}
As $\Gamma(\vartheta)<-\Lambda(\vartheta)<0$ and as 
$n\Delta(j_n)=o(\ell_n)$, the $O(.)$-expression is bounded in $n$. So (\ref{eq:summable-1}) is proved.

Now (\ref{eq:main-dist-1}) of Proposition \ref{prop:distortion-1}
shows that
\begin{equation}\label{eq:low-distortion}
e^{o(n_k)}
\leqslant
\frac{f_{n_k,\tilde{\vartheta}}'(x)}{g_{n_k}(\tilde{\vartheta})}
\leqslant
1
\end{equation}
uniformly for all $\tilde{\vartheta}\in U_{\epsilon_k}$ and all $x\in[0,a]$. In particular,
\begin{equation}\label{eq:dist-direct}
e^{o(n_k)}
\leqslant
\frac{f_{n_k,\tilde{\vartheta}}(a)}{g_{n_k}(\tilde{\vartheta})}
\leqslant
a
\end{equation}
uniformly for all
$\tilde{\vartheta}\in U_{\epsilon_k}$.

We turn to the determination of $\sigma_-(\vartheta)$. 
As in the proof of Proposition~\ref{prop:distortion-2} the distortion bound (\ref{eq:low-distortion}) implies analogous subexponential distortion bounds on the Jacobians $Jf_{n,U}$. Therefore, observing that 
$|{S^{n_k}U_{\epsilon_k}}|\geqslant\Delta>0$,
\begin{equation}\label{eq:also-nec}
e^{o(n_k)}
%\int_{S^{n_k}U_{\epsilon_k}}\ppl(t)\,dt
=
e^{o(n_k)}\,\frac{\int_{S^{n_k}U_{\epsilon_k}}\ppl(t)\,dt}{\int_{S^{n_k}U_{\epsilon_k}} a\,dt}
\leqslant
\frac{\int_{{U_{\epsilon_k}}}
\ppl(\tilde{\vartheta})\,d\tilde{\vartheta}}
{\int_{U_{\epsilon_k}}
\varphi_{n_k}(\tilde{\vartheta})\,d\tilde{\vartheta}}
\leqslant
1\fs
\end{equation}
%and it follows that 
%\begin{equation}
%\left|\log(1-\Sigma_{\epsilon_n}(\vartheta))
%-\log\frac{1}{2\epsilon_n^2}
%\int_{\vartheta-\epsilon_n}^{\vartheta+\epsilon_n}\psi_n(t)\,dt
%\right|=o(n).
%\end{equation}
As $\log\epsilon_k=-n_k\Lambda(\vartheta)+o(n_k)$ and 
$\varphi_{n_k}(\tilde{\vartheta})=f_{n_k,\tilde{\vartheta}}(a)
=g_{n_k}(\tilde{\vartheta})e^{o(n_k)}$ by (\ref{eq:dist-direct}), and as $\ppl(\vartheta)<\epsilon_k$ 
in view of (\ref{eq:ppl-small}), it follows from (\ref{eq:also-nec}) that
\begin{align}
{\sigma}_-(\vartheta)
&=
\lim_{k\to\infty}
\frac{\log\Sigma_{\epsilon_k}(\vartheta)}{\log\epsilon_k}
=
\lim_{k\to\infty}
\frac1{\log\epsilon_k}\left(
\log\frac{\int_{U_{\epsilon_k}}
\ppl(\tilde{\vartheta})\,d\tilde{\vartheta}}
{\epsilon_k|U_{\epsilon_k}|}\right)
\nn\\
&=
\lim_{k\to\infty}
\frac1{\log\epsilon_k}
\log\frac{\int_{U_{\epsilon_k}}
g_{n_k}(\tilde{\vartheta})\,d\tilde{\vartheta}}{2\epsilon_k^2}
=
\lim_{k\to\infty}
\frac{\log \epsilon_k^{-1}g_{n_k}(\vartheta)}{\log\epsilon_k}\nonumber\\
&=
-1+\frac{\Gamma(\vartheta)}{-\Lambda(\vartheta)}
=
-\frac{\Gamma(\vartheta)+\Lambda(\vartheta)}{\Lambda(\vartheta)}>0\fs
\end{align}

\section{Proofs for hyperbolic systems}
\label{sec:anosov-proofs}

\subsection{Proof of Proposition~\ref{prop:anosov-lemma}}
In the course of this proof we need the function $H(x):=\log\frac{h(x)}{x}$ which is well defined for $x\in(0,a]$ and which extends by continuity to $H(0):=0$. 
Note also that $H(a)\leqslant H(x)<0$ and 
$H'(x)=\frac{1}{h(x)}\left(h'(x)-\frac{h(x)}{x}\right)<0$ for $x\in(0,a]$.

From the definition of $F$ it follows that
\begin{equation}
\log F_{\hS^{-1}\theta}(x)=\log x+\log\hat{g}(\hS^{-1}\theta)+H(x)
\end{equation}
for $x\in(0,a]$
and, by induction,
\begin{equation}\label{eq:anosov-basic-id}
\log F^\ell_{\hS^{-\ell}\theta}(x)
=
\log x+\sum_{k=1}^\ell\log\hat{g}(\hS^{-k}\theta)
+\sum_{k=1}^\ell H(F_{\hS^{-\ell}\theta}^{\ell-k}(x))\fs
\end{equation}
Applied to $x=\hat{\varphi}_{n-\ell}(\hS^{-\ell}\theta)$ this yields
\begin{equation}\label{eq:two-sided}
\log\hat{\varphi}_n(\theta)
=
\log \hat{\varphi}_{n-\ell}(\hS^{-\ell}\theta)
+\sum_{k=1}^\ell\log\hat{g}(\hS^{-k}\theta)
+\sum_{k=1}^\ell H(\hat{\varphi}_{n-k}(\hS^{-k}\theta))\fs
\end{equation}
If we apply the same reasoning to the system with multiplier $g\circ\Pi$, we get
\begin{equation}\label{eq:one-sided}
\log\varphi_n(\Pi\theta)
=
\log \varphi_{n-\ell}(\Pi\hS^{-\ell}\theta))
+\sum_{k=1}^\ell\log{g}(\Pi\hS^{-k}\theta)
+\sum_{k=1}^\ell H(\varphi_{n-k}(\Pi\hS^{-k}\theta))\fs
\end{equation}
As $\log\hat{g}=\log g\circ\Pi+\hat{b}-\hat{b}\circ \hS^{-1}$
by (\ref{eq:log_g-cohom}), we can take the difference of (\ref{eq:two-sided}) and (\ref{eq:one-sided}) and obtain
\begin{equation}\label{eq:anosov-diff}
\begin{split}
\log\frac{\hat{\varphi}_n(\theta)}{\varphi_n(\Pi\theta)}
=
\log&\frac{\hat{\varphi}_{n-\ell}(\hS^{-\ell}\theta)}{\varphi_{n-\ell}(\Pi\hS^{-\ell}\theta)}
+\hat{b}(\hS^{-1}\theta)-\hat{b}(\hS^{-\ell-1}\theta)\\
&+\sum_{k=1}^\ell\left(H(\hat{\varphi}_{n-k}(\hS^{-k}\theta))-H(\varphi_{n-k}(\Pi\hS^{-k}\theta))\right)\fs
\end{split}
\end{equation}
%Suppose now that $\hat{\varphi}_n(\theta)\leqslant\varphi_n(\Pi\theta)$ and
Let
\begin{equation}
\ell(n)=\min\left\{k\in\{0,\dots,n\}: 
\hat{\varphi}_{n-k}(\hS^{-k}\theta)
\geqslant\varphi_{n-k}(\Pi\hS^{-k}\theta)\right\}\fs
\end{equation} 
The index $\ell(n)$ is well defined, because 
$\hat{\varphi}_{0}(\hS^{-n}\theta)=a=\varphi_{0}(\Pi\hS^{-n}\theta)$. 
We have $\hat{\varphi}_{n-k}(\hS^{-k}\theta)<\varphi_{n-k}(\Pi\hS^{-k}\theta)$
for $k=1,\dots,\ell(n)-1$, and as $H'<0$, we conclude from (\ref{eq:anosov-diff}) that
\begin{equation}
\begin{split}
\log\frac{\hat{\varphi}_n(\theta)}{\varphi_n(\Pi\theta)}
&\geqslant
-2\|\hat{b}\|_\infty
+\left(H(\hat{\varphi}_{n-\ell(n)}(\hS^{-\ell(n)}\theta))-H(\varphi_{n-\ell(n)}(\Pi\hS^{-\ell(n)}\theta))\right)\\
&\geqslant
-2\|\hat{b}\|_\infty+H(a)
\end{split}
\end{equation}
provided $\ell(n)\geqslant1$. If $\ell(n)0$, this estimate is trivially satisfied.
Similarly one proves that
$\log\frac{\hat{\varphi}_n(\theta)}{\varphi_n(\Pi\theta)}
\leqslant
2\|\hat{b}\|_\infty-H(a)$.
%if
%$\hat{\varphi}_n(\theta)\geqslant\varphi_n(\Pi\theta)$ so that
Therefore,
\begin{equation}
\left|\log\frac{\hat{\varphi}_n(\theta)}{\varphi_n(\Pi\theta)}\right|
\leqslant
2\|\hat{b}\|_\infty+|H(a)|\fs
\end{equation}
In the limit $n\to\infty$ we conclude that $\hppl(\theta)>0$ if and only if 
$\varphi_\infty(\Pi\theta)>0$ and that 
$|\log\hppl(\theta)-\log\varphi_\infty(\Pi\theta)|
\leqslant
2\|\hat{b}\|_\infty+|H(a)|$
for such $\theta$.

\subsection{The set of regular points and Remark~\ref{remark:regularity}a}
\label{subsec:regularity}
Recall that $W_\epsilon$ is the $\epsilon$-neighbourhood of the finite set $E$ of endpoints of monotonicity intervals of $S$ and that $\mu$ denotes some $S$-invariant probability measure. We assume that there is some $q>0$ such that
$\mu(W_\epsilon)=\mathcal{O}\left(\left(\log\log\frac{1}{\epsilon}\right)^{-(1+3q)}\right)$
as $\epsilon\to0$, which is equivalent to
(\ref{eq:weaker-than-Gibbs}).

Let $\tilde{n}_k:=\lfloor \exp(k^{\frac{1}{1+q}})\rfloor$, and observe that
$d_k:=\tilde{n}_{k+1}-\tilde{n}_k\geqslant C\ \exp(k^{\frac{1}{1+2q}})$ for some $C>0$. Fix $r>0$ and suppose that, for some $\vartheta\in\T$, $S^{n}\vartheta\in W_r$ for all $n\in(\tilde{n}_k,\tilde{n}_{k+1}]$. As $S$ is a piecewise expanding Markov map, $S(E)\subseteq E$ and $|(S^n)'|\geqslant C\lambda^n$ for some $C>0$ and $\lambda>1$. If $r>0$ is chosen sufficiently small, this implies that $S^{\tilde{n}_k}\vartheta\in W_{\lambda^{-d_k}}$.
Hence,
\begin{equation}
\begin{split}
&\hspace{-1cm}\mu\left\{\vartheta\in\T: \ S^n\vartheta\in W_r\text{ for all }
n\in(\tilde{n}_k,\tilde{n}_{k+1}]\right\}
\leqslant
\mu\left(S^{-\tilde{n}_k}W_{\lambda^{-d_k}}\right)
=
\mu\left(W_{\lambda^{-d_k}}\right)\\
&=
\mathcal{O}\left(\left(\log\log\lambda^{d_k}\right)^{-(1+3q)}\right)
=
\mathcal{O}\left(\left(\log d_k\right)^{-(1+3q)}\right)
=
\mathcal{O}\left(k^{-\frac{1+3q}{1+2q}}\right) \ .
\end{split}
\end{equation}
Now the Borel-Cantelli Lemma implies that for $\mu$-a.e. $\vartheta\in\T$ there is $k_\vartheta\in\N$ such that for all $k\geqslant k_\vartheta$ there is some $n_k\in(\tilde{n}_k,\tilde{n}_{k+1}]$ such that $S^{n_k}\vartheta\not\in W_r$. These $n_k$ satisfy
\begin{equation}
\limsup_{k\to\infty}\frac{n_{k+1}}{n_k}
\leqslant
\limsup_{k\to\infty}\frac{\tilde{n}_{k+2}}{\tilde{n}_k}
\leqslant
\limsup_{k\to\infty}\exp\left((k+2)^{\frac{1}{1+q}}-k^{\frac{1}{1+q}}\right)
=
1 \ ,
\end{equation}
and routine arguments for piecewise $C^{1+}$ expanding Markov maps show the existence of a constant $\Delta>0$ (depending on $r$ chosen above) such that (\ref{eq:extra-ass}) is satisfied.

\subsection{Anosov surface diffeomorphsims and their Markov maps}
\label{subsec:Anosov-Markov}
Choose one fixed $\hS^{-1}$-unstable fibre in each rectangle of the Markov partition $\{R_1,\dots,R_p\}$ and identify these $p$ fibres isometrically with intervals $J_1,\dots,J_p$. 
Denote by $J$ the disjoint union of $J_1,\dots,J_p$ and by $\varsigma:J\to\Theta=\mathbb{T}^2$ the map identifying the fibres and the intervals. Define
$\Pi:\Theta\to J$ as the map that projects a point $\theta\in R_i$ along its $\hS^{-1}$-stable fibre to the fibre $\varsigma(J_i)$ and then by $\varsigma^{-1}$ to $J_i$. Glueing the $J_i$ at their endpoints turns $J$ into a copy of $\T$ and affects only finitely many points in $J$.

Now we can define a map $S:\T\to \T$ by
$S(v)=\Pi(\hS^{-1}(\varsigma v))$. By construction, each $S(J_i)$ is a union of intervals $J_j$, and the resulting map is a Markov map w.r.t. the partition into intervals $J_i\cap S^{-1}J_j$. We must check that $S$ is piecewise $C^{1+}$.

Recall from \cite[eq. (8) in the proof of Lemma III.3.2]{Mane1987} that $\Pi$, the holonomy map along $\hS^{-1}$-stable fibres, is $C^{1+}$ with derivative 
\begin{equation}
D\Pi(\theta)
=
\lim_{N\to\infty}\left|\frac{D_u\hS^{-N}(\theta)}{D_u\hS^{-N}(\varsigma\Pi\theta)}\right|\fs
\end{equation}
Observe that $\varsigma\Pi\hS^{-1}\varsigma\Pi\theta=\varsigma\Pi\hS^{-1}\theta$ by construction of $\Pi$ and $\varsigma$. Therefore
\begin{equation}\label{eq:app-cohom}
\begin{split}
\frac{D\Pi(\theta)}{D\Pi(\hS^{-1}\theta)}
&=
\lim_{N\to\infty}
\left|\frac{D_u\hS^{-N}(\hS^{-1}\theta)D_u\hS^{-1}(\theta)}{D_u\hS^{-N}(\hS^{-1}\varsigma\Pi\theta)D_u\hS^{-1}(\varsigma\Pi\theta)}
\bigg/\frac{D_u\hS^{-N}(\hS^{-1}\theta)}
{D_u\hS^{-N}(\varsigma\Pi\hS^{-1}\theta)}\right|\\
&=
\left|\frac{D_u\hS^{-1}(\theta)}{D_u\hS^{-1}(\varsigma\Pi\theta)}\bigg/\lim_{N\to\infty}
\frac{D_u\hS^{-N}(\hS^{-1}\varsigma\Pi\theta)}{D_u\hS^{-N}(\varsigma\Pi\hS^{-1}\varsigma\Pi\theta)}\right|\\
&=
\left|\frac{D_u\hS^{-1}(\theta)}{D_u\hS^{-1}(\varsigma\Pi\theta)\cdot
D\Pi(\hS^{-1}\varsigma\Pi\theta)}\right|\\
&=
\left|\frac{D_u\hS^{-1}(\theta)}{S'(\Pi\theta)}\right|\fs
\end{split}
\end{equation}

\section{Large deviations for $S$}
Piecewise expanding mixing $C^{1+}$ Markov maps of $\T$ which are endowed with a positive $\alpha$-Hölder continuous weight function $g$ have the following property: There is some $\alpha'>0$ (that depends on $\alpha$ and the minimal expansion of $S$) such that the transfer operator $\LL_s$ introduced in (\ref{eq:PF}) has a simple leading eigenvalue $\lambda_s>0$ and
\begin{equation}
\LL_s^n\xi=\lambda_s^n \zeta_s m_s(\xi)+O(\gamma_s^n)
\end{equation}
for each function $\xi:\T\to\R$ that is $\alpha'$-Hölder restricted to each Markov interval of $S$. Here $\zeta_s$ is a strictly positive eigenfunction,
$m_s$ is a probability measure on $\T$ with full topological support, and $\gamma_s<\lambda_s$ \cite{Baladi2000,ParryPollicott}.

Suppose now that $(J_n)_{n\geqslant1}$ is a sequence of subintervals of $\T$ with $\inf_n|J_n|>0$. Fix $s\in\R$. Then $\inf_nm_s(J_n)>0$, 
because otherwise one could find a subsequence $(J_{n_i})$ with $\lim_{i\to\infty}m_s(J_{n_i})=0$ and a nontrivial interval $J$ that is contained in all these $J_{n_i}$. But then $m_s(J)=0$ in contradiction to the fact that $m_s$ has full support.
It follows that
\begin{equation}
\begin{split}
\lim_{n\to\infty}\frac{1}{n}\log\frac{\int_{J_n}e^{-s\log g_n}\,dm}{m(J_n)}
&=
\lim_{n\to\infty}\frac{1}{n}\log\int_\T\LL_s^n1_{J_n}\,dm\nn\\
&=
\lim_{n\to\infty}\frac{1}{n}\log\left(\lambda_s^nm_s({J_n})\int_\T\zeta_s\,dm+O(\gamma_s^n)\right)\\
&=
\log\lambda_s=\log\rho(\LL_s)=\psi(s)\co
\end{split}
\end{equation}
and this is a smooth strictly convex function of $s$. 
So we are in the situation to apply the large deviations theorem of Plachky/Steinebach \cite{Plachky1975}, and this yields the estimate in (\ref{eq:lower-LD}).

\bibliography{new-DFG}
\bibliographystyle{abbrv} 

\end{document}